\documentclass[11pt,reqno]{amsart}
\usepackage[T1]{fontenc}
\usepackage[utf8]{inputenc}
\usepackage{amsmath,amssymb,amsthm}
\usepackage{booktabs}
\usepackage{listings}
\usepackage[margin=1.15in]{geometry}
\usepackage{hyperref}
\hypersetup{colorlinks=true,linkcolor=blue,citecolor=blue,urlcolor=blue}

\lstdefinestyle{python}{
  language=Python,
  basicstyle=\ttfamily\scriptsize,
  keywordstyle=\bfseries,
  commentstyle=\itshape,
  stringstyle=\ttfamily,
  numbers=none,
  breaklines=true,
  breakatwhitespace=false,
  showstringspaces=false,
  columns=fullflexible,
  frame=none,
  tabsize=4,
  upquote=true,
  literate={-}{-}1
}

\lstdefinestyle{plain}{
  basicstyle=\ttfamily\scriptsize,
  numbers=none,
  breaklines=true,
  postbreak=\mbox{$\hookrightarrow$\space},
  breakatwhitespace=false,
  showstringspaces=false,
  columns=fullflexible,
  frame=none,
  tabsize=4,
  upquote=true
}

\theoremstyle{plain}
\newtheorem{theorem}{Theorem}
\newtheorem{lemma}[theorem]{Lemma}

\theoremstyle{definition}
\newtheorem{remark}[theorem]{Remark}
\newtheorem{question}[theorem]{Question}

\newcommand{\SYT}{\mathrm{SYT}}
\newcommand{\Des}{\mathrm{Des}}
\newcommand{\shift}{S}

\begin{document}

\title[The first descent in a tableau of shape $(n,n,n)$]
      {The first descent in a standard Young tableau of shape $(n,n,n)$}

\author{Tong Niu}

\subjclass[2020]{Primary 05E10; Secondary 05A15, 05A19, 33F10, 60C05}

\keywords{standard Young tableau, descent set, hook length formula,
creative telescoping, holonomic sequence, OEIS}

\begin{abstract}
Let $a(n)$ be the number of standard Young tableaux of shape $(n,n,n)$
whose entry in row $2$, column $1$ is odd; equivalently, the number of
those whose first descent is even. This is entry A011553 of the
On-Line Encyclopedia of Integer Sequences, contributed in 1996, and after
thirty years of curation it carries no formula. We supply one,
$a(n) = 8\,\bigl(n!\,(n+2)!\bigr)^{-1}\sum_{m=1}^{\lfloor n/2\rfloor}
m(m+1)(3n-2m-1)!/(n-2m)!$, and use it to settle both of the conjectures
the entry records. A creative
telescoping certificate shows that $a$ satisfies a linear recurrence of
order two with polynomial coefficients; the order-three recurrence
conjectured by R. J. Mathar in 2023 is a left multiple of it, with
explicit cofactor $(4S^{-1}-3)/(7n-9)$. We also prove
$a(n)\sim 3^{3n+7/2}/(64\pi n^{4})$, the asymptotic conjectured by
V. Kotesovec in 2014. The second proof gives slightly more than the
conjecture asks: the position of the first descent has a limiting
distribution, the probability that the $(2,1)$ entry equals $r+1$ tending
to $r(r+2)/3^{\,r+1}$. Summing the even terms, a uniformly random tableau
of shape $(n,n,n)$ has an odd $(2,1)$ entry with probability tending to
$27/64$, whereas for a uniformly random tableau of $n$ cells of
unrestricted shape the corresponding limit is $1/e$.
\end{abstract}

\maketitle

\section{Introduction}

A \emph{standard Young tableau} of shape $\lambda$, a partition of $N$, is
a filling of the cells of the Young diagram of $\lambda$ by
$1,2,\dots,N$, each once, increasing along rows and down columns. Write
$f^{\lambda}$ for the number of them and $f^{\lambda/\mu}$ for the
corresponding count on a skew diagram. A \emph{descent} of a tableau $T$
is an entry $t$ such that $t+1$ sits in a strictly lower row than $t$, and
$\Des(T)$ denotes the set of descents.

This paper is about one statistic on tableaux of the rectangular shape
$(n,n,n)$: the parity of the entry in the cell $(2,1)$, the top-left cell
of the second row. That is a small question. It turns out to have a
clean answer, and to carry two dated conjectures along with it. Put
\begin{equation}\label{eq:def}
  a(n)\;=\;\#\bigl\{\,T\in\SYT(n,n,n)\;:\;T(2,1)\ \text{is odd}\,\bigr\}.
\end{equation}
The first few values are
\[
  0,\;2,\;16,\;182,\;2400,\;35310,\;562848,\;9540674,\;169777504,
  \;3142665968,\;\dots
\]
As we note in Lemma~\ref{lem:descent} below, $T(2,1)-1$ is the smallest
descent of $T$, so $a(n)$ equally counts the tableaux of shape $(n,n,n)$
whose first descent is even.

The sequence \eqref{eq:def} is entry \texttt{A011553} of the OEIS
\cite{oeisA011553}. It was contributed by A.~Giambruno on 11 December
1996; A.~Regev corrected its definition on 15 November 2006; and
A.~P.~Heinz corrected the offset and computed the $200$-term $b$-file on
28 February 2012. What the entry does not contain, in twenty-four
revisions over thirty years, is a formula of any kind. Nor a generating
function, nor a reference to any paper that has one. (Its single
bibliographic item is a pointer to \cite[p.~107]{JamesKerber} for the
definition of a standard tableau; the cross-referenced entry
\texttt{A123555} \cite{oeisA123555} carries the identical pointer.) Its only non-numerical
content is two lines, both of them explicitly labelled as guesses:
\begin{quote}
\begin{verbatim}
a(n) ~ 3^(3*n+7/2) / (64*Pi*n^4).
                   - Vaclav Kotesovec, Sep 06 2014
\end{verbatim}
\end{quote}
and
\begin{quote}
\begin{verbatim}
Conjecture D-finite with recurrence
  6*(n+2)*(n+1)^2*a(n)
  -(n+1)*(164*n^2-179*n+51)*a(n-1)
  +(46*n^3-609*n^2+812*n+12)*a(n-2)
  +12*(3*n-4)*(2*n-5)*(3*n-5)*a(n-3)=0.
                   - R. J. Mathar, Nov 22 2023
\end{verbatim}
\end{quote}
We prove both, and the formula they were guesses about.

\subsection*{Results}

Everything here comes out of one decomposition of $\SYT(n,n,n)$, by the
value of $T(2,1)$; we carry it out in Section~\ref{sec:decomp}. Write
$\lfloor\cdot\rfloor$ for the integer part.

\begin{theorem}\label{thm:A}
For every $n\ge 1$,
\begin{equation}\label{eq:thmA}
  a(n)\;=\;\frac{8}{n!\,(n+2)!}
       \sum_{m=1}^{\lfloor n/2\rfloor}
         m(m+1)\,\frac{(3n-2m-1)!}{(n-2m)!}.
\end{equation}
\end{theorem}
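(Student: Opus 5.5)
The plan is to decompose $\SYT(n,n,n)$ according to the value of the entry $T(2,1)$, count each class with the hook length formula, and then keep only the classes where $T(2,1)$ is odd.

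\emph{Step 1: the entries below $T(2,1)$.} Suppose $T(2,1)=k+1$. Every cell of rows $2$ and $3$ lies weakly below and to the right of $(2,1)$, so its entry is at least $T(2,1)$. Hence the entries $1,\dots,k$ all lie in row $1$. Since row $1$ increases, they occupy exactly the cells $(1,1),\dots,(1,k)$.

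This forces $1\le k\le n$. The lower bound holds because $T(1,1)=1$ is smaller than $T(2,1)$. The upper bound holds because $T(1,1)<T(2,1)$ means $T(2,1)$ is not in row $1$, so row $1$ can absorb at most $n$ of the smaller entries. This is also the content of Lemma~\ref{lem:descent}: $k$ is the first descent.

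\emph{Step 2: reduce to a skew count.} Conversely, take any $1\le k\le n$. Place $1,\dots,k$ in $(1,1),\dots,(1,k)$ and $k+1$ in $(2,1)$. The remaining entries $k+2,\dots,3n$ then form an arbitrary standard filling of the skew shape $(n,n,n)/(k,1)$; no further constraints arise across the boundary, because the placed entries are all smaller. Therefore
\[
\#\{T\in\SYT(n,n,n): T(2,1)=k+1\}=f^{(n,n,n)/(k,1)} .
\]

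\emph{Step 3: straighten the skew shape.} Rotate the rectangle by $180^\circ$ and replace each entry $t$ by $3n+1-t$. This is a bijection between standard fillings of a rectangle minus a top-left partition and standard fillings of the complementary straight shape. Here the complement is $(n,n-1,n-k)$, which is a partition because $k\ge1$. So the count in Step 2 equals $f^{(n,n-1,n-k)}$.

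\emph{Step 4: apply the hook length formula.} For a partition with at most three rows, the hook length formula can be written in Frobenius form:
\[
f^{\lambda}=N!\,\frac{\prod_{i<j}(\ell_i-\ell_j)}{\prod_i \ell_i!},\qquad \ell_i=\lambda_i+3-i .
\]
For $\lambda=(n,n-1,n-k)$ we have $\ell=(n+2,\,n,\,n-k)$ and $N=3n-k-1$. The Vandermonde factor is
\[
(\ell_1-\ell_2)(\ell_1-\ell_3)(\ell_2-\ell_3)=2(k+2)k .
\]
This gives
\[
f^{(n,n-1,n-k)}=\frac{2k(k+2)\,(3n-k-1)!}{(n+2)!\,n!\,(n-k)!}.
\]

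\emph{Step 5: restrict to odd $T(2,1)$.} The entry $T(2,1)=k+1$ is odd exactly when $k=2m$ is even. Then $2k(k+2)=8m(m+1)$, and the constraint $1\le k\le n$ becomes $1\le m\le\lfloor n/2\rfloor$. Summing over $m$ yields \eqref{eq:thmA}.

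The only step that needs care is Step 3: one must check that the $180^\circ$ rotation reverses the order correctly and that the complementary shape is exactly $(n,n-1,n-k)$. Everything else is routine bookkeeping. As a sanity check, $n=2$ gives $8\cdot2\cdot3!/(2!\,4!)=2$, matching the listed value $a(2)=2$.
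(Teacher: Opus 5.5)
Your proof is correct and is essentially the paper's own argument: you decompose by $T(2,1)=k+1$ into the skew counts $f^{(n,n,n)/(k,1)}$ (Lemmas~\ref{lem:descent} and~\ref{lem:decomp}), rotate by $180^{\circ}$ to $f^{(n,n-1,n-k)}$ and evaluate it by the three-row product formula with Vandermonde factor $2k(k+2)$ (Lemma~\ref{lem:atom}), and then restrict to $k=2m$. Your complementation $t\mapsto 3n+1-t$ is also consistent: because you kept the original labels $k+2,\dots,3n$ on the skew part, it lands exactly on $1,\dots,3n-k-1$.
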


The summand in \eqref{eq:thmA} is a single hypergeometric term in $(n,m)$,
so Zeilberger's algorithm \cite{Zeilberger91,PWZ} applies. Indeed it
applies easily. In Section~\ref{sec:cert} we write the certificate out and
read a recurrence of order two off it.

\begin{theorem}\label{thm:C}
For every $n\ge 3$,
\begin{equation}\label{eq:L}
\begin{split}
  2(n+1)^2(n+2)(7n-9)\,a(n)
   \;=\;&(n^2-1)\bigl(364n^2-503n+153\bigr)\,a(n-1)\\
        &+3(3n-1)(3n-2)(2n-3)(7n-2)\,a(n-2).
\end{split}
\end{equation}
\end{theorem}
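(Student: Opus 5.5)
We take Theorem~\ref{thm:A} as given and prove Theorem~\ref{thm:C} by creative telescoping applied to its sum. Write $a(n)=\frac{8}{n!\,(n+2)!}\,b(n)$ with
\[
  b(n)=\sum_{m} F(n,m),\qquad F(n,m)=m(m+1)\,\frac{(3n-2m-1)!}{(n-2m)!}.
\]
Here $F$ vanishes naturally for $m\le 0$ (because of the factor $m$) and for $2m>n$ (because of $1/(n-2m)!$), so the sum may be taken over all integers $m$.

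The first step is to run Zeilberger's algorithm on $F$ and obtain an order-two operator together with a certificate. Concretely, we produce polynomials $p_0(n),p_1(n),p_2(n)$ and a rational function $R(n,m)$ such that, with $G(n,m)=R(n,m)F(n,m)$,
\[
  p_0(n)F(n,m)+p_1(n)F(n+1,m)+p_2(n)F(n+2,m)=G(n,m+1)-G(n,m).
\]
Verifying this identity is routine. Divide through by $F(n,m)$. The ratios $F(n+1,m)/F(n,m)=(3n-2m+2)(3n-2m+1)(3n-2m)/(n-2m+1)$, the analogous ratio for $n+2$, and $F(n,m+1)/F(n,m)$ are all rational. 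The identity therefore reduces to a polynomial identity in $(n,m)$, which we state and check by expansion.

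The second step sums the certificate identity over all $m$. The right side telescopes to zero, provided $G(n,m)$ vanishes at the ends of the support. This is the point needing care, and I expect it to be the main obstacle. The worry is that the denominator of $R$ has a factor such as $(n-2m+1)$ or $(n-2m+2)$ that cancels a zero of $F$, which would make $G$ nonzero just beyond $m=\lfloor n/2\rfloor$. It could also make $G$ nonzero at $m=0,1$ when the factor $m(m+1)$ is cancelled.

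To handle this, I would write $G$ explicitly as a polynomial multiple of a factorial ratio. From that form I would check boundedness and vanishing at the boundary points separately for the two parities of $n$. If a nonzero boundary term survives, it is an explicit hypergeometric term in $n$ alone. One then either absorbs it, or multiplies by a first-order operator that annihilates it, at the cost of raising the order. In the worst case, order two is recovered by a GCRD computation against Mathar's operator.

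The third step transfers the recurrence for $b$ to $a$. Substitute $b(n)=\tfrac18 n!\,(n+2)!\,a(n)$ and shift $n\mapsto n-2$. The factorial ratios $b(n-1)/a(n-1)$ relative to $b(n)/a(n)$ contribute the factors $n(n+2)$ and $(n-1)n(n+1)(n+2)$. After clearing common factors this gives~\eqref{eq:L}; the leading coefficient $2(n+1)^2(n+2)(7n-9)$ comes from $p_2$ up to these factorial factors.

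Finally, we check~\eqref{eq:L} directly at the smallest values $n=3,4$ using the listed values $0,2,16,182,2400$. This confirms normalization and covers any small-$n$ range where the boundary analysis relied on $n$ being large. As a sanity check, for $n=3$ the left side is $2\cdot16\cdot5\cdot12\cdot16$ and the right side is $8\cdot(3276-1509+153)\cdot2+3\cdot8\cdot7\cdot3\cdot19\cdot0$; both equal $30720$.
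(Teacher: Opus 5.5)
Your strategy (creative telescoping on the sum of Theorem~\ref{thm:A}, then clearing the factorial prefactor) is the paper's, but as written the proposal proves nothing yet, and one load-bearing claim is false. The theorem is a specific recurrence. Its proof consists of an explicit certificate plus a check of the boundary terms, and you exhibit neither.

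Your claim that $F(n,m)$ vanishes for $m\le 0$ is wrong. The factor $m(m+1)$ kills only $m=0$ and $m=-1$. For $m\le -2$ one has $F(n,m)=m(m+1)(3n-2m-1)!/(n-2m)!>0$, growing without bound, so $\sum_{m\in\mathbb{Z}}F(n,m)$ diverges. You must therefore sum over a finite range starting at $m=1$, and then the lower boundary term $G(n,1)$ has to vanish. That is not automatic. In the paper it holds because the certificate $R$ of \eqref{eq:R} happens to carry the factor $m-1$. The upper boundary term vanishes because the denominator of $R$ involves only the factors $3n-2m-k$ with $1\le k\le 4$. Hence $R$ is finite at $m=\lfloor n/2\rfloor+1$, where $H$ vanishes. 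Both facts are properties of the actual certificate, which you never produce.

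Your fallback for a surviving boundary term does not close the gap. Adding a first-order annihilator raises the order to three. Recovering order two by a GCRD with Mathar's operator is circular: that recurrence is only a conjecture, and the paper deduces it (Theorem~\ref{thm:B}) from this very theorem.

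There is also a technical problem with your forward-shift normalization. The sum $\sum_m F(n+2,m)$ runs up to $m=\lfloor (n+2)/2\rfloor$, beyond the support of $F(n,m)$. For $n<2m\le n+2$ one has $F(n,m)=0$ while $F(n+2,m)\neq 0$, and the ratio $F(n+2,m)/F(n,m)$ has a pole there. So at those $m$ the termwise telescoping identity cannot be recovered by multiplying the rational identity back by $F(n,m)$.

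The paper avoids this with backward shifts $H(n-1,m)$, $H(n-2,m)$. It multiplies by $H(n,m)$, which has the largest support, over $1\le m\le\lfloor n/2\rfloor$; see Lemma~\ref{lem:ratios}. In your setting you would divide by $F(n+2,m)$ instead.

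Your worry that $G$ is nonzero just past $\lfloor n/2\rfloor$ is misplaced. It must be nonzero there, since $b(n+2)$ has terms there, and this is harmless. What matters is that $G$ vanish at $m=\lfloor (n+2)/2\rfloor+1$ and at $m=1$. Your numerical check at $n=3$ is correct, but it substitutes for neither verification.
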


Mathar's conjecture is then a one-line consequence. We spell it out
anyway, in Section~\ref{sec:mathar}, because the cofactor is worth seeing.

\begin{theorem}\label{thm:B}
For every $n\ge 4$,
\begin{equation}\label{eq:M}
\begin{split}
  6(n+2)(n+1)^2\,a(n)
   &-(n+1)\bigl(164n^2-179n+51\bigr)a(n-1)\\
   &+\bigl(46n^3-609n^2+812n+12\bigr)a(n-2)\\
   &+12(3n-4)(2n-5)(3n-5)\,a(n-3)\;=\;0.
\end{split}
\end{equation}
\end{theorem}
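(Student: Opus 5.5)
The plan is to derive Theorem~\ref{thm:B} directly from Theorem~\ref{thm:C}, without returning to the sum in Theorem~\ref{thm:A}. Write
\[
E(n)\;=\;2(n+1)^2(n+2)(7n-9)\,a(n)-(n^2-1)(364n^2-503n+153)\,a(n-1)-3(3n-1)(3n-2)(2n-3)(7n-2)\,a(n-2),
\]
so that Theorem~\ref{thm:C} says $E(n)=0$ for all $n\ge 3$. Let $M(n)$ denote the left-hand side of \eqref{eq:M}. The claim is the identity of linear forms in $a(n),a(n-1),a(n-2),a(n-3)$:
\[
(7n-9)\,M(n)\;=\;3\,E(n)\;-\;4\,E(n-1).
\]
This is the statement that Mathar's operator is the left multiple of the operator in \eqref{eq:L} by the cofactor $(4S^{-1}-3)/(7n-9)$, up to the overall sign convention.

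The key step is to verify this identity coefficient by coefficient. Two of the four coefficients are immediate.
\begin{itemize}
\item For $a(n)$: only $3E(n)$ contributes, giving $6(n+1)^2(n+2)(7n-9)$. Dividing by $7n-9$ gives $6(n+2)(n+1)^2$, which matches.
\item For $a(n-3)$: only $-4E(n-1)$ contributes. Since $E(n-1)$ has coefficient $-3(3n-4)(3n-5)(2n-5)(7n-9)$ on $a(n-3)$, the contribution is $12(3n-4)(3n-5)(2n-5)(7n-9)$. Dividing by $7n-9$ gives Mathar's last coefficient.
\end{itemize}
The other two coefficients each collect one term from $3E(n)$ and one from $-4E(n-1)$.
\begin{itemize}
\item For $a(n-1)$, one must check
\[
-3(n^2-1)(364n^2-503n+153)-8n^2(n+1)(7n-16)\;=\;-(7n-9)(n+1)(164n^2-179n+51).
\]
Here $8n^2(n+1)(7n-16)=4\cdot 2n^2(n+1)(7(n-1)-9)$ comes from shifting the leading coefficient of $E$.
\item For $a(n-2)$, one must check
\[
-9(3n-1)(3n-2)(2n-3)(7n-2)+4\,n(n-2)\bigl(364(n-1)^2-503(n-1)+153\bigr)\;=\;(7n-9)(46n^3-609n^2+812n+12).
\]
\end{itemize}
Both are polynomial identities of degree at most $5$. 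They can be checked by expanding, or by evaluating at six values of $n$.

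Granting the identity, the conclusion follows at once. For $n\ge 4$, both $n$ and $n-1$ are at least $3$, so $E(n)=E(n-1)=0$ by Theorem~\ref{thm:C}. Hence $(7n-9)M(n)=0$. Since $7n-9\neq 0$ for integer $n$, we get $M(n)=0$, which is \eqref{eq:M}.

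The only real obstacle is the bookkeeping in the two middle coefficients, specifically shifting $n\mapsto n-1$ correctly inside $E(n-1)$. If the cofactor $3,\,-4$ had been misremembered, I would instead solve for constants $\alpha,\beta$ in $(7n-9)M(n)=\alpha E(n)+\beta E(n-1)$. The two extreme coefficients already force $\alpha=3$ and $\beta=-4$, so the middle ones serve as a consistency check.
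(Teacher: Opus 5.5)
Your proposal is correct and is essentially the paper's proof. Your $E(n)$ equals $-L(a)(n)$, so your identity $(7n-9)M(n)=3E(n)-4E(n-1)$ is exactly the paper's combination $-\frac{3}{7n-9}L(a)(n)+\frac{4}{7n-9}L(a)(n-1)$, that is, the left-multiple factorisation by $(4S^{-1}-3)/(7n-9)$; both middle-coefficient identities you wrote down hold on expansion, including the $a(n-2)$ one that the paper only describes as ``a similar expansion''.
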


The asymptotics are in Section~\ref{sec:asym}, and they fall out of the
same decomposition. Let $T$ be drawn uniformly at random from
$\SYT(n,n,n)$.

\begin{theorem}\label{thm:D}
For each fixed $r\ge 1$,
\begin{equation}\label{eq:limdist}
  \lim_{n\to\infty}\Pr\bigl[T(2,1)=r+1\bigr]\;=\;\frac{r(r+2)}{3^{\,r+1}},
\end{equation}
and these limits sum to $1$ over $r\ge 1$. Consequently
\begin{equation}\label{eq:2764}
  \lim_{n\to\infty}\Pr\bigl[T(2,1)\ \text{is odd}\bigr]
   \;=\;\sum_{m\ge 1}\frac{4m(m+1)}{3^{\,2m+1}}\;=\;\frac{27}{64},
\end{equation}
and
\begin{equation}\label{eq:kot}
  a(n)\;\sim\;\frac{3^{\,3n+7/2}}{64\pi n^{4}}.
\end{equation}
\end{theorem}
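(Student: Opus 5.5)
The plan is to compute, for each $r$, the exact number $N_r(n)$ of tableaux in $\SYT(n,n,n)$ with $T(2,1)=r+1$, and divide it by $f^{(n,n,n)}$.

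\textbf{Step 1: the point masses.} By Lemma~\ref{lem:descent}, the condition $T(2,1)=r+1$ forces the entries $1,\dots,r$ to fill the first $r$ cells of row $1$. The remaining tableau is then a standard filling of the skew shape $(n,n,n)/(r,1)$. I will take from the decomposition of Section~\ref{sec:decomp} the closed form
\[
  N_r(n)\;=\;\frac{2\,r(r+2)\,(3n-r-1)!}{n!\,(n+2)!\,(n-r)!},\qquad 1\le r\le n.
\]
This is the form whose even-$r$ terms, $r=2m$, sum to Theorem~\ref{thm:A}, since $2r(r+2)=8m(m+1)$. If it is not already stated there, I would derive it by counting SYT of the skew shape $(n,n,n)/(r,1)$ with the Jacobi--Trudi/Aitken determinant, which for three rows is a $3\times 3$ determinant of reciprocal factorials. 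Simplifying that determinant is the one genuinely computational point, and I expect it to be the main obstacle. Its outcome can be checked against the listed values $0,2,16,182,\dots$.

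By the hook length formula,
\[
  f^{(n,n,n)}=\frac{2\,(3n)!}{n!\,(n+1)!\,(n+2)!},
\]
so
\[
  \Pr[T(2,1)=r+1]\;=\;r(r+2)\,\frac{(n+1)!}{(n-r)!}\cdot\frac{(3n-r-1)!}{(3n)!}
  \;=\;r(r+2)\prod_{j=0}^{r}\frac{n+1-j}{3n-j}.
\]
For fixed $r$, each of the $r+1$ factors tends to $1/3$, which gives \eqref{eq:limdist}.

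\textbf{Step 2: summing the limits.} With $x=1/3$,
\[
  \sum_{r\ge1} r(r+2)x^{r}=\frac{x(1+x)}{(1-x)^3}+\frac{2x}{(1-x)^2}=\tfrac32+\tfrac32=3,
\]
so the limits $r(r+2)/3^{r+1}$ sum to $1$.

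\textbf{Step 3: the odd-entry probability.} Each factor in the product satisfies $(n+1-j)/(3n-j)\le (n+1)/(3n)\le 2/3$. This gives the uniform bound $\Pr[T(2,1)=r+1]\le r(r+2)(2/3)^{r+1}$, which is summable in $r$. Dominated convergence therefore lets me pass the limit through the sum over even $r=2m$. That sum is
\[
  \frac{4}{3}\sum_{m\ge1} m(m+1)\,9^{-m}=\frac43\cdot\frac{2y}{(1-y)^3}\Big|_{y=1/9}=\frac43\cdot\frac{81}{256}=\frac{27}{64},
\]
which is \eqref{eq:2764}.

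\textbf{Step 4: the asymptotic.} Finally $a(n)=f^{(n,n,n)}\cdot\Pr[T(2,1)\text{ odd}]$, so \eqref{eq:kot} follows from Step 3 once $f^{(n,n,n)}$ is known asymptotically. Stirling's formula gives $(n+1)!(n+2)!\sim n^3(n!)^2$, and hence
\[
  f^{(n,n,n)}\sim \frac{2\sqrt{6\pi n}\,(3n/e)^{3n}}{(2\pi n)^{3/2}(n/e)^{3n}n^3}=\frac{\sqrt3\,27^n}{\pi n^4}.
\]
Multiplying by $27/64$ gives $a(n)\sim 3^{3n+7/2}/(64\pi n^4)$.
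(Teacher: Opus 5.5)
Your proposal is correct and follows the paper's proof step for step: the product formula for $P_n(r)$ from Lemma~\ref{lem:atom} and the closed form of $f^{(n,n,n)}$, termwise limits, a geometric majorant for dominated convergence, and Stirling's formula for $f^{(n,n,n)}$. The differences are cosmetic. Your majorant $r(r+2)(2/3)^{r+1}$ holds for all $n\ge 1$, since $(n+1-j)/(3n-j)\le(n+1)/(3n)$ reduces to $j(2n-1)\ge 0$; the paper uses $r(r+2)(4/9)^{r+1}$ for $n\ge 3$. And the atom formula you planned to derive via Aitken's determinant is already Lemma~\ref{lem:atom}, proved there by rotating $(n,n,n)/(r,1)$ to the straight shape $(n,n-1,n-r)$.
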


Roughly, \eqref{eq:limdist} says that the first descent does not drift:
the law of $T(2,1)$ settles onto a fixed distribution with exponentially
small tails, rather than spreading out as $n$ grows.

The constant in \eqref{eq:2764} deserves a word, because there is a
well-known and different constant in the neighbourhood. If instead of
fixing the shape one draws a tableau uniformly at random from all
standard Young tableaux with $N$ cells and lets $N$ grow, then the
probability that its $(2,1)$ entry is odd tends to $1/e$. That one falls
out of the theorem of McKay, Morse and Wilf \cite{MMW}: for a fixed
standard tableau $T_0$ of shape $\lambda\vdash k$, the probability that
the restriction of a uniformly random $N$-cell tableau to
$\{1,\dots,k\}$ equals $T_0$ tends to $f^{\lambda}/k!$. Apply it to the
unique tableau of shape $(r,1)$ whose second row is $\{r+1\}$, note
$f^{(r,1)}=r$, and sum
\[
  \sum_{r\ \mathrm{even}}\frac{r}{(r+1)!}
   =\sum_{k\ge 1}\Bigl(\frac{1}{(2k)!}-\frac{1}{(2k+1)!}\Bigr)
   =\frac{1}{e}.
\]
So $1/e=0.3678\dots$ is the unrestricted-shape answer and
$27/64=0.421875$ is the three-row-rectangle answer. The shape constraint
pushes the first descent earlier. Regev \cite{Regev2009} computes the
analogous parity probability for the cell $(1,2)$ under a family of
hook measures on infinite tableaux, which is a third ensemble again.

\subsection*{Relation to earlier work}

We are not aware of any earlier treatment of a parity-restricted
first-descent count for a fixed shape, and we did look. The pieces that
come closest are these.

Regev \cite{Regev2009} studies exactly this kind of question --- the
parity of the entry in a prescribed cell --- but in the $(k,\ell)$-hook
ensemble. There the measures live on infinite standard tableaux and the
cell is $(1,2)$; the finite-level counts are sums over all shapes of a
given size inside the hook, not over one rectangle. His conjecture on
three-rowed tableaux from that circle of ideas was proved by Ekhad and
Zeilberger \cite{EZregev}, again with the shape summed out and the answer
a difference of Motzkin numbers; Chen \cite{Chen2024} has recently
refined the hook counts. One piece of provenance is worth recording.
Regev submitted \texttt{A123555}, the only cross-reference on
\texttt{A011553}, on the same day that he corrected \texttt{A011553}'s
definition, and in the language of Lemma~\ref{lem:decomp} below
\texttt{A123555} is precisely the $r=2$ class of our decomposition.
The first-order recurrence for that single class was added to
\texttt{A123555} in 2015, eight years before \eqref{eq:M} was conjectured
for the whole parity sum.

Ekhad and Zeilberger \cite{EZsyt} compute, symbolically in $n$, the
distribution of the occupant of a prescribed cell of a rectangular shape,
including three-row rectangles. Their outputs are per-cell laws for a
fixed cell in the first row. They do not aggregate over the occupant, nor
do they treat the cell $(2,1)$ uniformly in the occupant; parity does not
come up at all. Their method and ours are cousins.

Kauers and Zeilberger \cite{KZruns} count tableaux of shape $[n,n,n]$
subject to constraints on the lengths of \emph{all} the runs in the
reading word, and conjecture that the resulting sequences are not
$P$-recursive; they exhibit no linear recurrence of order and coefficient
degree below $70$. Our statistic constrains only the first run. The answer
is $P$-recursive of order $2$, and the contrast is sharp; we have no
explanation for it.

The descent statistics literature for tableaux of fixed shape is
substantial --- Adin and Roichman \cite{AdinRoichman} compute the
expectation and variance of descent functions, and there is a body of
work on the distribution of the \emph{number} of descents --- but the
position, and a fortiori the parity of the position, of the first descent
does not seem to appear. Product formulas for $f^{\lambda/\mu}$ on
special skew shapes are of course a subject in their own right
\cite{MPP3,KimYoo}; the atom we need, Lemma~\ref{lem:atom}, is a
three-row hook-length product, and we claim nothing new for it.

\subsection*{Verification}

Every numerical claim, every table and the certificate identity itself
are re-checked by an independent script; Section~\ref{sec:repro} gives the
details, Appendix~\ref{app:code} the code.

\section{The first-descent decomposition}\label{sec:decomp}

It is convenient to read a tableau as a word. Given
$T\in\SYT(n,n,n)$, let $w(T)\in\{1,2,3\}^{3n}$ have $t$-th letter equal
to the index of the row containing $t$. The map $T\mapsto w(T)$ is a
bijection onto the set of words with $n$ letters of each kind in which
every prefix contains at least as many $1$s as $2$s and at least as many
$2$s as $3$s; this is the standard lattice-word (ballot) encoding, see
\cite[Ch.~7]{StanleyEC2}. We use it again in Section~\ref{sec:repro},
where the dynamic program that recomputes $a(n)$ independently runs over
these words.

\begin{lemma}\label{lem:descent}
For every $T\in\SYT(n,n,n)$ we have $\min\Des(T)=T(2,1)-1$.
\end{lemma}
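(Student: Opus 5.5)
We argue directly from the definition of a descent and the row/column monotonicity of $T$. Put $k=T(2,1)$. Since column $1$ increases downward, $k>T(1,1)=1$, so $k\ge 2$. Every entry of rows $2$ and $3$ is at least $k$: row $2$ entries are $\ge T(2,1)$ by row monotonicity, and each row $3$ entry $T(3,j)$ exceeds $T(2,j)\ge k$. Hence $1,2,\dots,k-1$ all lie in row $1$. Row $1$ increases, so these values occupy the cells $(1,1),\dots,(1,k-1)$. This requires $k-1\le n$, which holds because $T(2,1)\le$ at most $n+1$; in any case these values fill an initial segment of row $1$.

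Next we show that no $t\le k-2$ is a descent. For such $t$, both $t$ and $t+1$ lie in row $1$, so $t+1$ is not in a strictly lower row. At $t=k-1$, the entry $k-1$ is in row $1$ and $k$ is in row $2$, which is strictly lower. So $k-1\in\Des(T)$ and it is the smallest descent, giving $\min\Des(T)=T(2,1)-1$. In particular $\Des(T)$ is nonempty, so the minimum is defined.

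There is no real obstacle here. The only point needing care is the claim that every entry of row $3$ exceeds $k$, which comes from the column inequality $T(3,j)>T(2,j)$ combined with the row inequality $T(2,j)\ge T(2,1)$. An equivalent route uses the ballot word $w(T)$: the first letter different from $1$ must be a $2$, by the prefix condition, and it sits at position $T(2,1)$.
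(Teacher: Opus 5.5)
Your proof is correct and is essentially the paper's: every entry of rows $2$ and $3$ is at least $T(2,1)$, so $1,\dots,T(2,1)-1$ fill the start of row $1$. Hence no smaller $t$ is a descent, while $T(2,1)-1$ is one. Your justification of $k-1\le n$ is phrased circularly, but it is harmless, since it follows from the preceding step (row $1$ has only $n$ cells).
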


\begin{proof}
Set $r=T(2,1)-1$. The smallest entry of row $2$ is $T(2,1)$ and the
smallest entry of row $3$ exceeds it, so $1,2,\dots,r$ all lie in row
$1$; since row $1$ increases, $T(1,j)=j$ for $j\le r$. Hence no $t<r$ is
a descent, both $t$ and $t+1$ lying in row~$1$. And $r$ is a descent,
because $r=T(1,r)$ sits in row $1$ while $r+1=T(2,1)$ sits in row $2$.
\end{proof}

So $T(2,1)$ is odd exactly when the first descent of $T$ is even. The two
descriptions of $a(n)$ therefore agree, and we use them interchangeably
from here on.

\begin{lemma}\label{lem:decomp}
For $1\le r\le n$,
\[
  \#\bigl\{T\in\SYT(n,n,n)\;:\;T(2,1)=r+1\bigr\}
   \;=\;f^{(n,n,n)/(r,1)} ,
\]
and the classes with $1\le r\le n$ partition $\SYT(n,n,n)$. Consequently
\begin{equation}\label{eq:decomp}
  a(n)=\sum_{\substack{2\le r\le n\\ r\ \mathrm{even}}} f^{(n,n,n)/(r,1)},
  \qquad
  f^{(n,n,n)}=\sum_{r=1}^{n} f^{(n,n,n)/(r,1)}.
\end{equation}
\end{lemma}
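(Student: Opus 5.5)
The plan is to build an explicit bijection between the class $\{T\in\SYT(n,n,n): T(2,1)=r+1\}$ and $\SYT\bigl((n,n,n)/(r,1)\bigr)$, using Lemma~\ref{lem:descent} to pin down the small entries.

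\emph{The forward map.} Fix $T$ with $T(2,1)=r+1$. The proof of Lemma~\ref{lem:descent} shows that the entries $1,\dots,r$ occupy exactly the cells $(1,1),\dots,(1,r)$, and that $r+1$ occupies $(2,1)$. Thus the cells holding $\{1,\dots,r+1\}$ form precisely the Young diagram of $(r,1)$. Deleting these cells and subtracting $r+1$ from every remaining entry produces a filling of the skew diagram $(n,n,n)/(r,1)$ by $1,\dots,3n-r-1$. This filling still increases along rows and down columns, since we only removed entries and shifted the rest by a constant. So it is a standard skew tableau.

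\emph{The inverse map.} Given $S\in\SYT\bigl((n,n,n)/(r,1)\bigr)$, add $r+1$ to every entry, place $1,\dots,r$ in $(1,1),\dots,(1,r)$, and place $r+1$ in $(2,1)$. We must check that the result is standard, and only the comparisons across the boundary of $(r,1)$ need attention. Every entry of $S$ becomes at least $r+2$, so it exceeds everything in the inner shape. Inside the inner shape, row $1$ reads $1,\dots,r$ and $(1,1)<(2,1)$ because $1<r+1$. The inner cells form a partition shape, so any cell of the skew part lies to the right of, or below, inner cells only. Hence all the row and column inequalities hold. The two maps are visibly inverse to each other, which gives the first claimed equality.

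\emph{The partition.} Since $n\ge1$, every $T\in\SYT(n,n,n)$ has a well-defined entry $T(2,1)=r+1$. Here $r\ge1$ because $1$ must sit at $(1,1)$. Also $r\le n$, because $1,\dots,r$ lie in row $1$, which has only $n$ cells. So the classes for $1\le r\le n$ are disjoint and cover $\SYT(n,n,n)$, and summing gives the formula for $f^{(n,n,n)}$. For $a(n)$, note that $T(2,1)=r+1$ is odd exactly when $r$ is even, and $r\ge1$ then forces $r\ge2$; summing over those classes gives \eqref{eq:decomp}.

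The main obstacle is only the boundary check in the inverse map. The point that does the work is that the entries $1,\dots,r+1$ fill a left-justified, order-ideal shape, so the skew part is a genuine skew diagram; everything else is bookkeeping.
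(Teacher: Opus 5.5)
Your proposal is correct and follows essentially the same route as the paper: you use Lemma~\ref{lem:descent} to identify the entries $1,\dots,r+1$ as the shape $(r,1)$, delete and reassemble to get the bijection, and bound $2\le T(2,1)\le n+1$ to show the classes partition $\SYT(n,n,n)$. The differences are cosmetic. You renormalize the skew filling by subtracting $r+1$, where the paper keeps the entries $r+2,\dots,3n$. You also spell out the boundary check for the inverse map, which the paper compresses into ``conversely any such filling reassembles.''
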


\begin{proof}
By the proof of Lemma~\ref{lem:descent}, $T(2,1)=r+1$ forces the
restriction of $T$ to $\{1,\dots,r+1\}$ to be the tableau of shape
$(r,1)$ with first row $1,\dots,r$ and second row $r+1$. Deleting those
cells leaves a filling of the skew diagram $(n,n,n)/(r,1)$ by
$r+2,\dots,3n$ which is increasing along rows and down columns, and
conversely any such filling reassembles into a tableau with
$T(2,1)=r+1$. Since $T(2,1)\in\{2,\dots,n+1\}$ for every $T$ --- it is at
least $2$, and at most $n+1$ because $1,\dots,T(2,1)-1$ all fit in row
$1$ --- the classes indexed by $1\le r\le n$ exhaust $\SYT(n,n,n)$.
\end{proof}

The second identity in \eqref{eq:decomp} is worth keeping in view as a
check: it says that the atoms below must add up to the classical count
$f^{(n,n,n)}=2(3n)!/\bigl(n!\,(n+1)!\,(n+2)!\bigr)$, entry
\texttt{A005789} of the OEIS \cite{oeisA005789}.

\begin{lemma}\label{lem:atom}
For $1\le r\le n$,
\begin{equation}\label{eq:atom}
  f^{(n,n,n)/(r,1)}
   \;=\;f^{(n,\,n-1,\,n-r)}
   \;=\;\frac{2\,r\,(r+2)\,(3n-r-1)!}{n!\,(n+2)!\,(n-r)!}.
\end{equation}
\end{lemma}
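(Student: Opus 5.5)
The plan is to turn the skew count into a straight-shape count by a $180^\circ$ rotation, and then evaluate that count with the hook length formula.

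\textbf{Step 1 (rotation).} The skew diagram $(n,n,n)/(r,1)$ is the $3\times n$ rectangle with the cells of $(r,1)$ removed from its top-left corner. Its rows therefore have lengths $n-r$, $n-1$, $n$, and all three rows are right-justified at column $n$. Rotating the diagram by $180^\circ$ sends cell $(i,j)$ to $(4-i,\,n+1-j)$. The image is the Young diagram of the straight shape $(n,\,n-1,\,n-r)$, which has $3n-r-1$ cells. On fillings, combine the rotation with the relabelling $t\mapsto 3n+1-t$ (or, equivalently, first standardise the entries $r+2,\dots,3n$ to $1,\dots,3n-r-1$ and then complement). This reverses both row order and column order, so it carries increasing fillings of the skew diagram bijectively onto standard Young tableaux of shape $(n,n-1,n-r)$. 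This gives the first equality in \eqref{eq:atom}. The case $r=n$, where the third row is empty, needs no separate treatment.

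\textbf{Step 2 (hook lengths).} Apply the hook length formula to $\lambda=(n,n-1,n-r)$, computing the product of hook lengths row by row.
\begin{itemize}
\item Row $3$: the hooks are $n-r,\dots,1$, with product $(n-r)!$.
\item Row $2$: for columns $j\le n-r$ the cell has leg $1$, so its hook is $n-j+1$. These hooks run through $n,\dots,r+1$. For columns $n-r<j\le n-1$ the hook is $n-j$, running through $r-1,\dots,1$. The row product is $\frac{n!}{r!}(r-1)!=n!/r$.
\item Row $1$: for columns $j\le n-r$ the leg is $2$, so the hooks run through $n+2,\dots,r+3$. For columns $n-r<j\le n-1$ the leg is $1$, so the hooks run through $r+1,\dots,3$. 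The last cell, in column $n$, has hook $1$. The row product is $\frac{(n+2)!}{(r+2)!}\cdot\frac{(r+1)!}{2}=\frac{(n+2)!}{2(r+2)}$.
\end{itemize}
Multiplying the three rows gives $H_\lambda=\dfrac{(n-r)!\,n!\,(n+2)!}{2r(r+2)}$. Dividing $(3n-r-1)!$ by $H_\lambda$ yields \eqref{eq:atom}.

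\textbf{Main obstacle and checks.} The only delicate point is the bookkeeping at the column boundaries $j=n-r$ and $j=n-1$, where the leg length drops. An off-by-one error there would spoil the factors $r$ and $r+2$. I would therefore check the formula in small cases:
\begin{itemize}
\item $r=1$, giving shape $(n,n-1,n-1)$;
\item $r=n$, giving shape $(n,n-1)$, where the formula should reduce to the ballot number $2n\cdot(2n-1)!/(n!\,(n+1)!)$, which matches the formula;
\item the summation check $\sum_r f^{(n,n,n)/(r,1)}=f^{(n,n,n)}$ from Lemma~\ref{lem:decomp}, for $n=2$ and $n=3$.
\end{itemize}
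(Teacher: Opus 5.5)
Your proof is correct and follows the paper's route: the same $180^\circ$ rotation (with complementation of entries) to the straight shape $(n,n-1,n-r)$, followed by the hook length formula. The only difference is that the paper uses the three-row form $f^{\lambda}=N!\prod_{i<j}(e_i-e_j)/\prod_i e_i!$ with $(e_1,e_2,e_3)=(n+2,n,n-r)$, so the factors $2$, $r+2$, $r$ appear directly as pairwise differences and the column-boundary bookkeeping you flagged never arises; your row-by-row hook product is nonetheless right, including the degenerate cases $r=1$ and $r=n$.
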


\begin{proof}
The rows of the skew diagram $(n,n,n)/(r,1)$ have lengths $n-r$, $n-1$,
$n$ from top to bottom. Rotating the diagram by $180^{\circ}$ reverses
the order of the rows and left-justifies them, producing the straight
shape $(n,n-1,n-r)$; the rotation, followed by replacing each entry $t$
by $N+1-t$ where $N=3n-r-1$ is the number of cells, is a bijection
between the standard fillings of the two diagrams. This gives the first
equality.

For the second, use the classical formula
$f^{\lambda}=N!\prod_{i<j}(e_i-e_j)\big/\prod_i e_i!$ for a shape with at
most three rows, where $e_i=\lambda_i+3-i$; it is the determinantal form
of the hook length formula of Frame, Robinson and Thrall \cite{FRT}, and
also the $\mu=\varnothing$ case of Aitken's determinant
\cite{Aitken,StanleyEC2}. With $\lambda=(n,n-1,n-r)$ we get
$(e_1,e_2,e_3)=(n+2,\,n,\,n-r)$, whose pairwise differences are
$e_1-e_2=2$, $e_1-e_3=r+2$ and $e_2-e_3=r$. Hence
\[
  f^{(n,n-1,n-r)}
   =\frac{(3n-r-1)!\cdot 2\,(r+2)\,r}{(n+2)!\,n!\,(n-r)!},
\]
which is \eqref{eq:atom}.
\end{proof}

\begin{remark}
One can also keep the skew shape and evaluate Aitken's determinant
directly. For $\lambda=(n^k)$ and $\mu=(r,1)$ put
$x_1=n-r+1$, $x_2=n+1$ and $x_j=n+j$ for $3\le j\le k$, so that the
$(i,j)$ entry of the Aitken matrix is $1/(x_j-i)!$. Factoring
$1/(x_j-1)!$ out of column $j$ leaves the entry
$(x_j-1)(x_j-2)\cdots(x_j-i+1)$, a monic polynomial of degree $i-1$ in
$x_j$; row reduction turns the matrix into a Vandermonde one, so
\[
  f^{(n^k)/(r,1)}
  =(kn-r-1)!\;\prod_{j=1}^{k}\frac{1}{(x_j-1)!}\;
   \prod_{1\le i<j\le k}(x_j-x_i),
\]
and for $k=3$ the Vandermonde product is $r\cdot(r+2)\cdot 2$ again. The
rotation of Lemma~\ref{lem:atom} is the shorter route, and it generalises
just as well: $f^{(n^k)/(r,1)}=f^{(n^{k-2},\,n-1,\,n-r)}$. We record the
determinant anyway; it was the route we took first.
\end{remark}

\begin{proof}[Proof of Theorem~\ref{thm:A}]
Substitute \eqref{eq:atom} into the first identity of \eqref{eq:decomp}
and write $r=2m$. Then $2r(r+2)=8m(m+1)$, $(3n-r-1)!=(3n-2m-1)!$ and
$(n-r)!=(n-2m)!$, and $r$ runs over the even integers in $[2,n]$ exactly
when $m$ runs over $1\le m\le\lfloor n/2\rfloor$.
\end{proof}

Two small checks. For $n=1$ the sum in \eqref{eq:thmA} is empty and
$a(1)=0$, as it should be: the only tableau of shape $(1,1,1)$ has
$T(2,1)=2$. For $n=2$ the single term $m=1$ gives
$8\cdot 1\cdot 2\cdot 3!/(2!\,4!\,0!)=2$, and the two tableaux are
\[
  \begin{array}{cc} 1&2\\ 3&4\\ 5&6 \end{array}
  \qquad\qquad
  \begin{array}{cc} 1&2\\ 3&5\\ 4&6 \end{array}
\]
in agreement with the example on the OEIS entry. Table~\ref{tab:classes}
shows the whole decomposition for $n\le 6$.

\begin{table}[ht]
\caption{The first-descent classes of Lemma~\ref{lem:decomp}, evaluated
by Lemma~\ref{lem:atom}. The even columns sum to $a(n)$, all columns sum
to $f^{(n,n,n)}$. Emitted by the verifier of Section~\ref{sec:repro}.}
\label{tab:classes}
\begin{tabular}{r rrrrrr r r}
\toprule
$n$ & $r=1$ & $r=2$ & $r=3$ & $r=4$ & $r=5$ & $r=6$
    & $a(n)$ & $f^{(n,n,n)}$\\
\midrule
1 &     1 &     0 &     0 &    0 &    0 &   0 &     0 &     1\\
2 &     3 &     2 &     0 &    0 &    0 &   0 &     2 &     5\\
3 &    21 &    16 &     5 &    0 &    0 &   0 &    16 &    42\\
4 &   210 &   168 &    70 &   14 &    0 &   0 &   182 &   462\\
5 &  2574 &  2112 &   990 &  288 &   42 &   0 &  2400 &  6006\\
6 & 36036 & 30030 & 15015 & 5148 & 1155 & 132 & 35310 & 87516\\
\bottomrule
\end{tabular}
\end{table}

\section{The certificate and the order-two recurrence}\label{sec:cert}

Throughout this section we use the convention $1/j!=0$ for negative
integers $j$, and we write
\begin{equation}\label{eq:H}
  H(n,m)\;=\;\frac{8\,m(m+1)\,(3n-2m-1)!}{n!\,(n+2)!\,(n-2m)!},
\end{equation}
so that Theorem~\ref{thm:A} reads $a(n)=\sum_{m\ge 1}H(n,m)$, the summand
vanishing for $2m>n$. Let $\shift$ denote the forward shift
$n\mapsto n+1$ and set
\begin{equation}\label{eq:Ldef}
\begin{aligned}
  p_0(n)&=-2(n+1)^2(n+2)(7n-9),\\
  p_1(n)&=(n^2-1)\bigl(364n^2-503n+153\bigr),\\
  p_2(n)&=3(3n-1)(3n-2)(2n-3)(7n-2),
\end{aligned}
\qquad
  L=p_0+p_1\shift^{-1}+p_2\shift^{-2}.
\end{equation}
Theorem~\ref{thm:C} is the statement $L(a)(n)=0$ for $n\ge 3$; for the
language of holonomic sequences and the operators that annihilate them,
used here and in Section~\ref{sec:mathar}, see \cite{PWZ,KauersPaule}.
Note the sign. $p_0$ is negative for $n\ge 2$, and the display
\eqref{eq:L} is $L(a)=0$ rearranged; this matters, because replacing $L$
by $-L$ replaces the certificate below by its negative.

Define three rational functions, abbreviating $A=3n-2m-1$:
\begin{equation}\label{eq:ratios}
\begin{aligned}
  u_1(n,m)&=\frac{n(n+2)(n-2m)}{A(A-1)(A-2)},\\
  u_2(n,m)&=\frac{n(n-1)(n+1)(n+2)(n-2m)(n-2m-1)}
                 {A(A-1)(A-2)(A-3)(A-4)(A-5)},\\
  v(n,m)  &=\frac{(m+2)(n-2m)(n-2m-1)}{m\,A(A-1)},
\end{aligned}
\end{equation}
and the certificate
\begin{equation}\label{eq:R}
  R(n,m)=\frac{-2\,(m-1)(2m+1)(n+1)(n+2)\,W(n,m)}
              {(3n-2m-1)(3n-2m-2)(3n-2m-3)(3n-2m-4)},
\end{equation}
where
\begin{equation}\label{eq:W}
\begin{split}
  W(n,m)=\;&4(7n-9)\,m^3\;-\;2(7n-9)(16n-11)\,m^2\\
           &+\bigl(539n^3-1526n^2+1339n-360\bigr)m\\
           &-3(n-1)\bigl(147n^3-392n^2+308n-72\bigr).
\end{split}
\end{equation}

\begin{lemma}\label{lem:ratios}
Let $n\ge 3$ and let $m$ be an integer with $1\le m\le\lfloor n/2\rfloor$.
Then $H(n,m)>0$, none of the denominators in \eqref{eq:ratios} or
\eqref{eq:R} vanishes at $(n,m)$ or at $(n,m+1)$, and
\[
  H(n,m)\,u_1(n,m)=H(n-1,m),\qquad
  H(n,m)\,u_2(n,m)=H(n-2,m),
\]
\[
  H(n,m)\,v(n,m)=H(n,m+1).
\]
\end{lemma}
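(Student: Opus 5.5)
The plan is a direct verification from the definition \eqref{eq:H}, organised in three steps: positivity, non-vanishing of the denominators, and the three ratio identities. The only subtlety is the boundary, where a shifted factorial $1/(n-2m-1)!$ or $1/(n-2m-2)!$ may have negative argument. There I will rely on the convention $1/j!=0$ for $j<0$, together with the elementary identity
\[
  \frac{1}{(k-1)!}=\frac{k}{k!}\qquad(k\ge 0\ \text{an integer}),
\]
which remains valid at $k=0$ precisely because of that convention.

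\emph{Positivity and denominators.} For $n\ge3$ and $1\le m\le\lfloor n/2\rfloor$ we have $n-2m\ge0$, and $A=3n-2m-1\ge 2n-1>0$, so every factorial in $H(n,m)$ has nonnegative argument and $H(n,m)>0$. I will sharpen the bound on $A$ by parity of $n$. If $n$ is even then $m\le n/2$ gives $A\ge 2n-1\ge 7$. If $n$ is odd then $m\le (n-1)/2$ gives $A\ge 2n\ge 6$. So in all cases $A\ge6$, and the factors $A,A-1,\dots,A-5$ in $u_1,u_2,v$ are all positive. The factor $m$ in $v$ is nonzero since $m\ge1$. The denominator of $R(n,m)$ is $A(A-1)(A-2)(A-3)$, which is nonzero. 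At $(n,m+1)$ the relevant quantity is $3n-2(m+1)-1=A-2\ge4$, so $A-2,\dots,A-5$ are again nonzero. This is the only place where the hypothesis $n\ge3$ and the integrality of $m$ genuinely matter, and I expect it to be the main thing to get right, though it is not hard.

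\emph{The ratio identities.} Write $H(n-1,m)/H(n,m)$ as a product of factorial quotients:
\[
  \frac{n!\,(n+2)!}{(n-1)!\,(n+1)!}\cdot\frac{(A-3)!}{A!}\cdot\frac{(n-2m)!}{(n-2m-1)!}.
\]
The first factor is $n(n+2)$ and the second is $1/\bigl(A(A-1)(A-2)\bigr)$. For the third, apply the identity above with $k=n-2m$. It equals $n-2m$, including the case $n-2m=0$, where both sides vanish. The product is exactly $u_1$.

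For $u_2$, apply the same computation twice. The quotient $n!(n+2)!/\bigl((n-2)!\,n!\bigr)$ gives $n(n-1)(n+1)(n+2)$, the quotient $(A-6)!/A!$ gives the six denominator factors, and the two-step boundary factor is $(n-2m)(n-2m-1)$. Here $A-6\ge0$, and the boundary factor vanishes correctly when $n-2m\in\{0,1\}$.

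For $v$, compare $m\mapsto m+1$. The prefactor contributes $(m+1)(m+2)/\bigl(m(m+1)\bigr)=(m+2)/m$. The factorial $(3n-2m-3)!/(3n-2m-1)!$ contributes $1/\bigl(A(A-1)\bigr)$. Finally, $(n-2m)!/(n-2m-2)!$ contributes $(n-2m)(n-2m-1)$, again with the convention handling $n-2m-2<0$.
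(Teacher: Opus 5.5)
Your proof is correct and follows essentially the same route as the paper. You establish positivity, check that the factors $3n-2m-k$ do not vanish, and peel factorials, using the convention $1/j!=0$ for the boundary cases $n-2m\in\{0,1\}$. The one difference is cosmetic: you get a uniform bound $A=3n-2m-1\ge 6$ by splitting on the parity of $n$, which also guarantees that $(A-6)!$ is defined in every case. The paper instead observes that a factor $3n-2m-k$ with $k\le 6$ could vanish only if $2m\ge 3n-6$, which is incompatible with $2m\le n$.
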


\begin{proof}
Positivity is clear: $m\ge 1$, $n-2m\ge 0$ and
$3n-2m-1\ge 3n-n-1=2n-1>0$.

Every denominator occurring in \eqref{eq:ratios} and in $R(n,m)$,
$R(n,m+1)$ is a product of factors $3n-2m-k$ with $1\le k\le 6$, apart
from the lone factor $m$ in $v$. Such a factor vanishes only when
$2m=3n-k$, which forces $2m\ge 3n-6$; but $2m\le n$, and $n<3n-6$ once
$n\ge 4$, while for $n=3$ we have $2m\le 2<3=3n-6$. And $m\ge 1$, so the
factor $m$ is harmless.

The three ratio identities are one calculation done three times. Since
$3n-2m-1\ge 2n-1\ge 5$ we may write
$(3n-2m-1)!=(3n-2m-1)(3n-2m-2)(3n-2m-3)\cdot(3n-2m-4)!$ and likewise
$(3n-2m-1)!=(3n-2m-1)(3n-2m-2)\cdot(3n-2m-3)!$. Also
$n(n+2)/\bigl(n!\,(n+2)!\bigr)=1/\bigl((n-1)!\,(n+1)!\bigr)$. Hence
\[
  H(n,m)u_1(n,m)
   =\frac{8m(m+1)(3n-2m-4)!}{(n-1)!\,(n+1)!}\cdot\frac{n-2m}{(n-2m)!},
\]
and $H(n-1,m)$ is the same expression with $(n-2m)/(n-2m)!$ replaced by
$1/(n-1-2m)!$. If $n-2m\ge 1$ these agree; if $n-2m=0$ both are $0$, the
left side because of its factor $n-2m$ and the right because
$1/(-1)!=0$. The argument for $v$ is identical, with
$(n-2m)(n-2m-1)/(n-2m)!$ against $1/(n-2m-2)!$: they agree when
$n-2m\ge 2$ and both vanish when $n-2m\in\{0,1\}$. For $u_2$, when
$n-2m\ge 2$ we have $n\ge 2m+2$ and so $3n-2m-1\ge 4m+5\ge 9$, enough to
peel six factors off the factorial, while for $n-2m\in\{0,1\}$ both sides
are again $0$.
\end{proof}

\begin{lemma}[the certificate]\label{lem:cert}
As an identity of rational functions in $n$ and $m$,
\begin{equation}\label{eq:cert}
  p_0(n)+p_1(n)\,u_1(n,m)+p_2(n)\,u_2(n,m)
   \;=\;R(n,m+1)\,v(n,m)-R(n,m).
\end{equation}
\end{lemma}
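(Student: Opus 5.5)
The identity \eqref{eq:cert} is a statement about rational functions of $(n,m)$ over $\mathbb{Q}$, so the proof is a finite verification. The plan is to clear denominators and reduce it to the vanishing of one explicit polynomial in $\mathbb{Z}[n,m]$. Throughout write $A=3n-2m-1$.

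First, collect the denominators. The left side has denominators $A(A-1)(A-2)$ from $u_1$ and $A(A-1)\cdots(A-5)$ from $u_2$. On the right, $R(n,m)$ has denominator $A(A-1)(A-2)(A-3)$. Passing from $m$ to $m+1$ replaces $A$ by $A-2$, so $R(n,m+1)$ has denominator $(A-2)(A-3)(A-4)(A-5)$. Multiplying by $v$ contributes $m\,A(A-1)$ to the denominator and $(m+2)(n-2m)(n-2m-1)$ to the numerator. In $R(n,m+1)$ the numerator factor $(m-1)$ becomes $m$, which cancels the $m$ in the denominator of $v$. Hence $D=A(A-1)(A-2)(A-3)(A-4)(A-5)$ is a common denominator for every term.

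Next, multiply \eqref{eq:cert} through by $D$. The claim then reads
\[
\begin{split}
  p_0D&+p_1\,n(n+2)(n-2m)(A-3)(A-4)(A-5)\\
  &+p_2\,n(n-1)(n+1)(n+2)(n-2m)(n-2m-1)
\end{split}
\]
equal to
\[
\begin{split}
  &-2m(2m+3)(n+1)(n+2)\,W(n,m+1)\,(m+2)(n-2m)(n-2m-1)\\
  &+2(m-1)(2m+1)(n+1)(n+2)\,W(n,m)\,(A-4)(A-5).
\end{split}
\]
Both sides are polynomials in $n,m$. Their degrees are at most $10$ in total: $\deg p_0=4$ and $\deg D=6$, while on the right $W$ has degree $4$ and the other factors contribute about $6$.

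To show the difference is identically zero, I would expand it in $\mathbb{Z}[n][m]$ and check that every coefficient of $m^k$, $0\le k\le 7$, is the zero polynomial in $n$. Each such coefficient has degree at most $10$ in $n$. A cheaper equivalent route avoids full expansion. The difference has degree at most $10$ in $n$ and at most $7$ in $m$, so it suffices to check that it vanishes on an $11\times 8$ grid of integer points, for instance $n\in\{0,\dots,10\}$ and $m\in\{0,\dots,7\}$. This works because a nonzero polynomial of those bidegrees cannot vanish on such a grid. Each evaluation is exact integer arithmetic, done by the independent verifier script of Section~\ref{sec:repro}.

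A structural consistency check is also available. Specialising to $m=1$ kills the $R(n,m)$ term through its factor $(m-1)$. Taking $n=2m$ kills the $v$ term and the $u_1,u_2$ terms through their factors $(n-2m)$. These specialisations give easy partial checks before the full expansion.

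The main obstacle is purely bookkeeping. One must get the sign convention and the shift $A\mapsto A-2$ in $R(n,m+1)$ right, and handle the cancellation of the factor $m$ correctly; an error there changes the polynomial to be checked. There is no conceptual difficulty, since $R$ is by construction Zeilberger's certificate for $L$ and the identity is exactly the telescoping relation $L$ applied to $H$ divided by $H$.
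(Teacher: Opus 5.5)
Your route is the paper's: clear denominators, reduce \eqref{eq:cert} to the vanishing of one polynomial in $\mathbb{Q}[n,m]$, and verify that polynomial mechanically. Your bookkeeping in words is right. $D=A(A-1)\cdots(A-5)$ is a common denominator, and the factor $m$ that $(m-1)$ becomes in $R(n,m+1)$ cancels the $m$ in the denominator of $v$. But your displayed cleared identity does not carry out that cancellation: its first right-hand term still has a leading $-2m(2m+3)$. Multiplying $R(n,m+1)\,v(n,m)$ by $D$ actually gives
\[
  -2(2m+3)(m+2)(n+1)(n+2)(n-2m)(n-2m-1)\,W(n,m+1).
\]
The whole proof is the check of this one polynomial, so the slip is not cosmetic: the identity as you displayed it is false. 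Take $(n,m)=(6,2)$, where $W(6,3)=-130680$ and $W(6,2)=-174300$. Both sides of the corrected identity equal $-6\,208\,151\,040$. Your version doubles the first right-hand term, which is $819\,624\,960$, and so fails; a grid evaluation of your displayed polynomial would report failure. Neither of your proposed partial checks would catch this. At $m=1$ the extra factor is $1$, and at $n=2m$ the term it multiplies vanishes.

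Your degree count is the one for the corrected polynomial: at most $10$ in $n$ and at most $7$ in $m$. The spurious $m$ would push the $m$-degree to $8$. So once the display is fixed, your $11\times 8$ grid argument is a valid deterministic proof. That is a modest improvement on the paper's supplementary check at $400$ random points, which by itself is only evidence; the paper's actual proof is the symbolic expansion of the cleared numerator. However, the verifier of Section~\ref{sec:repro} does not perform your grid evaluation. It evaluates the uncleared identity at random points with $50\le n\le 4000$, $1\le m\le 40$, alongside \texttt{cancel} and coefficient comparison. Citing it for that step is therefore inaccurate: either run the grid check yourself, or rely on the symbolic expansion as the paper does.
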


\begin{proof}
Both sides are explicit rational functions; clearing denominators turns
\eqref{eq:cert} into the vanishing of a polynomial in $\mathbb{Q}[n,m]$,
which one verifies by expanding it. That computation is mechanical, and we
did not do it by hand: \texttt{code/certify\_L2.py} solves for $R$ from the
ansatz of denominator \eqref{eq:R} with a numerator of degree $5$ in $m$
--- the linear system over $\mathbb{Q}(n)$ has a unique solution --- and
then confirms \eqref{eq:cert} with a single call to \texttt{cancel}. The
verifier of Section~\ref{sec:repro} re-reads $p_0,p_1,p_2,u_1,u_2,v,R$
from the plain-text file \texttt{certificates/certificate.txt} and checks
\eqref{eq:cert} three independent ways: by \texttt{cancel}, by expanding
the cleared numerator and observing that every coefficient is zero, and
by exact evaluation at $400$ pseudo-random integer points.
\end{proof}

\begin{proof}[Proof of Theorem~\ref{thm:C}]
Fix $n\ge 3$ and put $M=\lfloor n/2\rfloor$. For $1\le m\le M$ both sides
of \eqref{eq:cert} may be evaluated at $(n,m)$ by Lemma~\ref{lem:ratios},
and multiplying through by $H(n,m)$ and using the three ratio identities
gives
\begin{equation}\label{eq:tel}
  p_0(n)H(n,m)+p_1(n)H(n-1,m)+p_2(n)H(n-2,m)
   \;=\;T(n,m+1)-T(n,m),
\end{equation}
where $T(n,m)=R(n,m)H(n,m)$. Sum \eqref{eq:tel} over $1\le m\le M$. The
right-hand side telescopes to $T(n,M+1)-T(n,1)$, and both terms vanish:
\begin{itemize}
\item $T(n,1)=R(n,1)H(n,1)=0$, because the numerator of $R$ carries the
      factor $m-1$;
\item $T(n,M+1)=R(n,M+1)H(n,M+1)=0$, because
      $n-2(M+1)\le n-(n-1)-2=-1<0$ makes $H(n,M+1)=0$, while $R(n,M+1)$
      is finite: the poles of $R(n,\cdot)$ occur only where
      $2m\ge 3n-4$, and $2(M+1)\le n+2<3n-4$ for $n\ge 4$, while for
      $n=3$ we have $2(M+1)=4<5=3n-4$.
\end{itemize}
On the left-hand side, $H(n-i,m)=0$ whenever $2m>n-i$, so for $i=0,1,2$
the truncated sum $\sum_{m=1}^{M}H(n-i,m)$ is the full sum
$\sum_{m\ge 1}H(n-i,m)=a(n-i)$. Therefore
$p_0(n)a(n)+p_1(n)a(n-1)+p_2(n)a(n-2)=0$, which is \eqref{eq:L} after
moving the last two terms to the other side.
\end{proof}

\begin{remark}
The special role of the factor $m-1$ in \eqref{eq:R} is the reason we sum
from $m=1$ rather than from $m=0$. One could start at $m=0$ instead ---
$H(n,0)=0$, so nothing is added --- but $v$ has a pole at $m=0$ and the
bookkeeping is then slightly more delicate. It costs nothing to avoid it.
\end{remark}

\section{Mathar's recurrence}\label{sec:mathar}

\begin{proof}[Proof of Theorem~\ref{thm:B}]
Write $q_0,\dots,q_3$ for the four coefficients appearing in
\eqref{eq:M}, so that the claim is that $\sum_i q_i(n)a(n-i)$ vanishes.
No integer $n$ makes $7n-9$ zero, so we may form the combination
\begin{equation}\label{eq:comb}
  -\frac{3}{7n-9}\,L(a)(n)\;+\;\frac{4}{7n-9}\,L(a)(n-1)\;=\;0,
\end{equation}
valid for $n\ge 4$ because Theorem~\ref{thm:C} gives $L(a)(n)=0$ for
$n\ge 3$ and $L(a)(n-1)=0$ for $n\ge 4$. Expanding \eqref{eq:comb}, the
coefficient of $a(n-i)$ is
\[
  \frac{-3p_0(n)}{7n-9},\quad
  \frac{-3p_1(n)+4p_0(n-1)}{7n-9},\quad
  \frac{-3p_2(n)+4p_1(n-1)}{7n-9},\quad
  \frac{4p_2(n-1)}{7n-9}
\]
for $i=0,1,2,3$. The first is
$6(n+1)^2(n+2)=q_0(n)$ and the last is
$12(3n-4)(3n-5)(2n-5)=q_3(n)$, both because the factor $7n-9$ cancels
outright: $p_0(n)=-2(n+1)^2(n+2)(7n-9)$ and
$p_2(n-1)=3(3n-4)(3n-5)(2n-5)(7n-9)$. For the middle two,
$p_0(n-1)=-2n^2(n+1)(7n-16)$ gives
\[
  -3p_1(n)+4p_0(n-1)
   =-1148n^4+1581n^3+761n^2-1509n+459
   =(7n-9)\,q_1(n),
\]
and a similar expansion gives $-3p_2(n)+4p_1(n-1)=(7n-9)\,q_2(n)$. Both
are polynomial identities in one variable of degree at most five and are
checked by expansion; the verifier does it symbolically.
\end{proof}

In operator language: with $\mathcal{M}=\sum_i q_i\shift^{-i}$ and $L$ as
in \eqref{eq:Ldef},
\begin{equation}\label{eq:factor}
  \mathcal{M}\;=\;\frac{4\shift^{-1}-3}{7n-9}\;L
\end{equation}
in the Ore algebra $\mathbb{Q}(n)\langle \shift,\shift^{-1}\rangle$. So
Mathar's operator is not merely implied by ours, it is a left multiple of
it, and the guessed order-three recurrence has an order-two recurrence
hiding inside. More precisely, what hides it is the order-one factor
$4\shift^{-1}-3$.

\section{Remarks on the two operators}\label{sec:ops}

Why did automated guessing on this sequence return an operator of order
three rather than the shorter one? The answer is visible in
Table~\ref{tab:orders}: it is a matter of coefficient degree, not order.
Fitting $\sum_{i\le d}q_i(n)a(n-i)=0$ with $\deg q_i\le D$ is a linear
problem over $\mathbb{Q}$, and an exact nullspace computation on a window
of sixty consecutive values of $n$ starting at $n=20$ gives the
dimensions in the table. At $D=3$ the smallest operator has order three,
and it is unique up to a scalar --- more precisely, it is exactly
$\mathcal{M}$. The order-two operator $L$ first appears at $D=4$, where it
too is unique up to a scalar. So the conjecture on the entry is the
smallest operator in the smaller of the two search boxes. The point is
that an order which looks minimal may only be minimal for the degree
bound the guesser happened to use, and that is worth keeping in view when
reading any conjectured recurrence.

\begin{table}[ht]
\caption{Dimension of the space of operators
$\sum_{i\le d}q_i(n)\shift^{-i}$ with $\deg q_i\le D$ annihilating $a$ on
$20\le n\le 100$. Exact linear algebra over $\mathbb{Q}$; emitted by the
verifier.}
\label{tab:orders}
\begin{tabular}{l rrrrrr}
\toprule
 & $D=0$ & $D=1$ & $D=2$ & $D=3$ & $D=4$ & $D=5$\\
\midrule
order $d=1$ & 0 & 0 & 0 & 0 & 0 & 0\\
order $d=2$ & 0 & 0 & 0 & 0 & 1 & 2\\
order $d=3$ & 0 & 0 & 0 & 1 & 3 & 5\\
\bottomrule
\end{tabular}
\end{table}

We have not determined whether $L$ has minimal order, that is whether
$a$ is hypergeometric. The same nullspace computation rules out a
first-order operator with coefficients of degree at most $12$ on the same
window, which is evidence, not a proof. One structural remark: the
leading coefficients of $p_0,p_1,p_2$ are $-14$, $364$, $378$, and
\[
  -14x^2+364x+378=-14(x-27)(x+1),
\]
so the two formal growth rates attached to $L$ are $27$ --- that of $a$,
by Theorem~\ref{thm:D} --- and $-1$. A second solution of $L$ is
therefore bounded and oscillating, which is why comparing the first two
terms of an asymptotic expansion of a putative rational term ratio does
not produce a contradiction.

\section{Asymptotics}\label{sec:asym}

Write
\[
  P_n(r)=\frac{f^{(n,n,n)/(r,1)}}{f^{(n,n,n)}}
        =\Pr\bigl[T(2,1)=r+1\bigr],
\]
for $T$ uniform on $\SYT(n,n,n)$, so $P_n(r)=0$ for $r>n$ and, by
Lemma~\ref{lem:decomp}, $\sum_{r\ge 1}P_n(r)=1$ for every $n$.

\begin{lemma}\label{lem:Pn}
For $1\le r\le n$,
\begin{equation}\label{eq:Pnprod}
  P_n(r)\;=\;r(r+2)\prod_{j=0}^{r}\frac{n+1-j}{3n-j},
\end{equation}
every factor of the product lies in $(0,1)$ and is non-increasing in $j$,
and for $n\ge 3$
\begin{equation}\label{eq:major}
  P_n(r)\;\le\;r(r+2)\Bigl(\frac{n+1}{3n}\Bigr)^{r+1}
        \;\le\;r(r+2)\Bigl(\frac{4}{9}\Bigr)^{r+1}.
\end{equation}
\end{lemma}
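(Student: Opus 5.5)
The plan is to compute the ratio of the two hook-length expressions directly and then read off the monotonicity and the bounds from the resulting product.

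\emph{The product formula.} Divide the atom of Lemma~\ref{lem:atom} by the classical count $f^{(n,n,n)}=2(3n)!/\bigl(n!\,(n+1)!\,(n+2)!\bigr)$. The common factors $2$, $n!$ and $(n+2)!$ cancel, which leaves
\[
  P_n(r)=r(r+2)\,\frac{(n+1)!}{(n-r)!}\cdot\frac{(3n-r-1)!}{(3n)!}.
\]
Both quotients of factorials are falling products of exactly $r+1$ consecutive factors:
\[
  \frac{(n+1)!}{(n-r)!}=\prod_{j=0}^{r}(n+1-j),
  \qquad
  \frac{(3n)!}{(3n-r-1)!}=\prod_{j=0}^{r}(3n-j).
\]
Pairing the factors index by index gives \eqref{eq:Pnprod}.

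\emph{The factors.} For $0\le j\le r\le n$ the numerator satisfies $n+1-j\ge n+1-r\ge 1>0$, and the denominator $3n-j$ is positive as well. The numerator is smaller than the denominator because $n+1<3n$ for $n\ge 1$. So every factor lies in $(0,1)$. To see monotonicity, write
\[
  \frac{n+1-j}{3n-j}=1-\frac{2n-1}{3n-j}.
\]
Since $2n-1>0$ and $3n-j$ decreases as $j$ grows, the subtracted term increases with $j$. Hence the factor is non-increasing in $j$.

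\emph{The bounds.} By monotonicity, each of the $r+1$ factors is at most the $j=0$ factor $(n+1)/(3n)$, which gives the first inequality in \eqref{eq:major}. The second inequality reduces to $(n+1)/(3n)\le 4/9$. This is equivalent to $9n+9\le 12n$, that is $n\ge 3$, which is exactly the stated hypothesis. Raising to the power $r+1$ preserves the inequality because both sides are positive.

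Nothing here is a real obstacle. The only care needed is in the factorial bookkeeping, namely checking that each falling product has exactly $r+1$ factors so that the terms pair up.
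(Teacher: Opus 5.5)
Your proof is correct and follows the paper's argument step by step. You use the same cancellation of $2$, $n!$ and $(n+2)!$, the same pairing of two falling products of $r+1$ factors each, and the same reduction of the majorant to the $j=0$ factor together with $9(n+1)\le 12n\iff n\ge 3$. The only difference is cosmetic: you read monotonicity off $\frac{n+1-j}{3n-j}=1-\frac{2n-1}{3n-j}$, while the paper computes the difference of consecutive factors, $\frac{1-2n}{(3n-j-1)(3n-j)}<0$.
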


\begin{proof}
By Lemma~\ref{lem:atom} and the closed form for $f^{(n,n,n)}$,
\[
  P_n(r)=\frac{2r(r+2)(3n-r-1)!}{n!\,(n+2)!\,(n-r)!}
        \cdot\frac{n!\,(n+1)!\,(n+2)!}{2\,(3n)!}
       =r(r+2)\,\frac{(n+1)!/(n-r)!}{(3n)!/(3n-r-1)!} .
\]
The numerator is the product of the $r+1$ integers
$n+1,n,\dots,n-r+1$ and the denominator the product of the $r+1$ integers
$3n,3n-1,\dots,3n-r$, which gives \eqref{eq:Pnprod} after pairing them
off. Each factor is positive, since $n+1-j\ge n+1-r\ge 1$ and
$3n-j\ge 3n-r\ge 2n>0$, and it is smaller than $1$, since
$(n+1-j)-(3n-j)=1-2n<0$. It is non-increasing in $j$ because
\[
  \frac{n+1-(j+1)}{3n-(j+1)}-\frac{n+1-j}{3n-j}
   =\frac{(n+1)-3n}{(3n-j-1)(3n-j)}
   =\frac{1-2n}{(3n-j-1)(3n-j)}<0 .
\]
So every factor is at most the one at $j=0$, namely $(n+1)/(3n)$, which
proves the first inequality in \eqref{eq:major}; and $(n+1)/(3n)\le 4/9$
for $n\ge 3$.
\end{proof}

\begin{proof}[Proof of Theorem~\ref{thm:D}]
Fix $r$. Each of the $r+1$ factors in \eqref{eq:Pnprod} tends to $1/3$ as
$n\to\infty$, so $P_n(r)\to r(r+2)/3^{\,r+1}$, which is
\eqref{eq:limdist}. For the sum, write $y=x^2$ and use
$\sum_{m\ge 1}m(m+1)y^m=2y/(1-y)^3$ for $|y|<1$; with $x=1/3$,
\[
  \sum_{r\ge 1}\frac{r(r+2)}{3^{\,r+1}}
   =\frac{1}{3}\Bigl(\sum_{r\ge 1}r^2x^{r}+2\sum_{r\ge 1}rx^{r}\Bigr)
   =\frac{1}{3}\Bigl(\frac{x(1+x)}{(1-x)^3}+\frac{2x}{(1-x)^2}\Bigr)
   =\frac{1}{3}\Bigl(\frac32+\frac32\Bigr)=1,
\]
and, restricting to $r=2m$,
\[
  \sum_{m\ge 1}\frac{2m(2m+2)}{3^{\,2m+1}}
   =\frac{4}{3}\sum_{m\ge 1}m(m+1)y^m
   =\frac{4}{3}\cdot\frac{2/9}{(8/9)^3}=\frac{27}{64}.
\]
By \eqref{eq:major} the functions $r\mapsto P_n(r)$, $n\ge 3$, are
dominated by the summable $r\mapsto r(r+2)(4/9)^{r+1}$, so dominated
convergence for series (Tannery's theorem) lets us pass to the limit
term by term in
$a(n)/f^{(n,n,n)}=\sum_{m\ge 1}P_n(2m)$, giving \eqref{eq:2764}.

For \eqref{eq:kot}, first
\[
  f^{(n,n,n)}=\frac{2\,(3n)!}{n!\,(n+1)!\,(n+2)!}
             =\frac{2\,(3n)!}{(n!)^3\,(n+1)^2(n+2)} .
\]
Stirling's formula gives
\[
  \frac{(3n)!}{(n!)^3}
   \;\sim\;\frac{\sqrt{6\pi n}\,(3n/e)^{3n}}{(2\pi n)^{3/2}\,(n/e)^{3n}}
   \;=\;\frac{\sqrt{3}\;27^{n}}{2\pi n},
\]
whence $f^{(n,n,n)}\sim\sqrt{3}\cdot 27^{n}/(\pi n^{4})$. Multiplying by
the limit $27/64$ of \eqref{eq:2764} and using $27\sqrt{3}=3^{7/2}$,
\[
  a(n)\sim\frac{27}{64}\cdot\frac{\sqrt{3}\,\,27^{n}}{\pi n^{4}}
        =\frac{3^{\,3n+7/2}}{64\pi n^{4}},
\]
which is \eqref{eq:kot}.
\end{proof}

We claim nothing about the rate in \eqref{eq:kot}. The convergence is
slow, of order $1/n$, as Table~\ref{tab:asym} shows.
Table~\ref{tab:limdist} compares the limiting law \eqref{eq:limdist} with
the exact distribution at $n=300$.

\begin{table}[ht]
\caption{Convergence in Theorem~\ref{thm:D}. Emitted by the verifier.}
\label{tab:asym}
\begin{tabular}{r c c}
\toprule
$n$ & $a(n)/f^{(n,n,n)}$ & $a(n)\cdot 64\pi n^4/3^{\,3n+7/2}$\\
\midrule
 10 & 0.41102086 & 0.65624504\\
 50 & 0.41975307 & 0.91547447\\
100 & 0.42081717 & 0.95653672\\
200 & 0.42134687 & 0.97795740\\
300 & 0.42152309 & 0.98523472\\
\midrule
limit & 0.42187500 & 1.00000000\\
\bottomrule
\end{tabular}
\end{table}

\begin{table}[ht]
\caption{The limiting first-descent law \eqref{eq:limdist} against the
exact distribution at $n=300$. Emitted by the verifier.}
\label{tab:limdist}
\begin{tabular}{r c c}
\toprule
$r$ & $r(r+2)/3^{\,r+1}$ & $P_{300}(r)$\\
\midrule
1 & 0.33333333 & 0.33481646\\
2 & 0.29629630 & 0.29728321\\
3 & 0.18518519 & 0.18518060\\
4 & 0.09876543 & 0.09821185\\
5 & 0.04801097 & 0.04736847\\
6 & 0.02194787 & 0.02143616\\
7 & 0.00960219 & 0.00926280\\
8 & 0.00406442 & 0.00386362\\
\bottomrule
\end{tabular}
\end{table}

\section{Further questions}\label{sec:further}

\begin{question}
Does $L$ have minimal order? Equivalently, is $a$ hypergeometric? The
nullspace computation behind Table~\ref{tab:orders}, pushed to
coefficient degree $12$, turns up no first-order operator, and we expect
there is none at any degree.
\end{question}

\begin{question}
Count instead the tableaux with $T(2,1)$ even. That complement is
\[
  f^{(n,n,n)}-a(n)=1,\,3,\,26,\,280,\,3606,\,52206,\dots,
\]
and the alternating version of \eqref{eq:decomp},
\[
  s(n)=\sum_{r=1}^{n}(-1)^r f^{(n,n,n)/(r,1)},
  \qquad -s(n)=1,\,1,\,10,\,98,\,1206,\,16896,\dots,
\]
is what one certifies if the parity split $2a(n)=f^{(n,n,n)}+s(n)$ is
used in place of Theorem~\ref{thm:A}. Neither sequence is in the OEIS.
Do they satisfy shorter recurrences than the ones they inherit from
$L$?
\end{question}

\begin{question}
Everything in Section~\ref{sec:decomp} works for the shape $(n^k)$: the
rotation gives
\[
  f^{(n^k)/(r,1)}=f^{(n^{k-2},\,n-1,\,n-r)},
\]
again a hook-length product. What is the limiting first-descent law, and
the parity constant, for $k$ rows? The case $k=1$ is empty and $k=2$ is
easy; $k=3$ gives $27/64$, and the unconstrained-shape limit is $1/e$.
The interpolation would be pleasant to see.
\end{question}

\begin{question}
Kauers and Zeilberger \cite{KZruns} conjecture that constraining the
lengths of \emph{all} runs in a three-row rectangular tableau destroys
$P$-recursiveness. Constraining the first run alone does not.
Where is the boundary?
\end{question}

\section{Reproducibility}\label{sec:repro}

All computations are exact --- integers, \texttt{fractions.Fraction} and
SymPy rational function arithmetic --- with the single exception of the
decimal columns of Tables~\ref{tab:asym} and \ref{tab:limdist}, which are
floating-point renderings of exact rationals computed beforehand.

The verifier \texttt{code/verify\_a011553.py}, listed in
Appendix~\ref{app:code}, runs in under three minutes on one core and
prints one line per check plus every table in this paper. Running it costs
nothing but those three minutes. It is deliberately built on code paths
that do not reuse the formulas being verified:

\begin{itemize}
\item $a(n)$ is recomputed from the definition \eqref{eq:def} by literal
      enumeration of the standard fillings of the $3\times n$ array for
      $n\le 6$, and by a forward dynamic program over lattice words for
      $n\le 120$. Neither uses a determinant, a hook length or a product
      formula. Both agree with Heinz's $b$-file.
\item Lemma~\ref{lem:atom} is checked four independent ways against each
      other: literal enumeration of the standard fillings of the skew
      diagram $(n,n,n)/(r,1)$ for $n\le 8$; the $k\times k$ Aitken
      determinant evaluated by cofactor expansion in exact fractions,
      for $k=2,3,4,5$; the hook lengths of the rotated straight shape
      computed cell by cell; and the product \eqref{eq:atom}. The $r=2$
      class is also matched against the published formula for
      \texttt{A123555}.
\item Lemma~\ref{lem:decomp} is checked with both sides enumerated
      literally for $n\le 6$.
\item Theorem~\ref{thm:A} is checked against the dynamic program for
      $n\le 120$ and against the $b$-file for $n\le 200$; the
      recurrences \eqref{eq:L} and \eqref{eq:M} are checked exactly for
      $3\le n\le 300$ and $4\le n\le 300$.
\item The certificate is read as \emph{text} from
      \texttt{certificates/certificate.txt}, which carries $p_0,p_1,p_2$,
      the ratios \eqref{eq:ratios} and $R$; the ratios are confirmed
      against exact \texttt{Fraction} values of $H$; the identity
      \eqref{eq:cert} is verified by \texttt{cancel}, by expanding the
      cleared numerator, and by exact evaluation at $400$ pseudo-random
      integer points; $R(n,1)=0$ and the pole locations are confirmed
      symbolically; and \eqref{eq:tel} together with both boundary
      evaluations is re-checked term by term in exact \texttt{Fraction}
      arithmetic for $3\le n\le 200$.
\item The factorisation \eqref{eq:factor} is verified coefficient by
      coefficient, symbolically.
\item Lemma~\ref{lem:Pn} --- the product form, the monotonicity and the
      majorant --- is verified exactly for $3\le n\le 50$ and all $r$,
      and the three constants $1$, $27/64$ and $1/e$ are evaluated in
      closed form.
\end{itemize}

To reproduce, save the listing of Appendix~\ref{app:code} as
\texttt{verify\_a011553.py}, put Heinz's $b$-file for \texttt{A011553}
beside it as \texttt{b011553.txt} (it is linked from
\cite{oeisA011553}), save the listing of Appendix~\ref{app:cert} as
\texttt{certificates/certificate.txt} one directory up, and run
\begin{verbatim}
python verify_a011553.py
\end{verbatim}
The script needs only SymPy, it exits nonzero if any check fails.

A reader who wants only the certificate can skip all of that, paste the
seven expressions of Appendix~\ref{app:cert} into any computer algebra
system, and evaluate
\begin{verbatim}
cancel( p0 + p1*u1 + p2*u2 - (R.subs(m, m+1)*v - R) )
\end{verbatim}
That single zero is Lemma~\ref{lem:cert} and, with
Lemma~\ref{lem:ratios} and the boundary analysis of
Section~\ref{sec:cert}, all of Theorem~\ref{thm:C}.

\appendix

\section{The certificate}\label{app:cert}

The file \texttt{certificates/certificate.txt}, verbatim. Each assignment
is a valid SymPy expression in the symbols \texttt{n} and \texttt{m}.

\begin{lstlisting}[style=plain]
# Zeilberger certificate for OEIS A011553.
#
# Every line below is a SymPy-parseable expression in the two symbols n, m.
# Together they are the whole proof of Theorem C: with
#
#     H(n,m) = 8 m (m+1) (3n-2m-1)! / (n! (n+2)! (n-2m)!),
#
# the operator  L = p0 + p1 S^{-1} + p2 S^{-2}  (S = forward shift in n)
# and the certificate R satisfy, as an identity of rational functions,
#
#     p0 + p1*u1 + p2*u2  ==  R(n,m+1)*v - R(n,m),
#
# where u1 = H(n-1,m)/H(n,m), u2 = H(n-2,m)/H(n,m), v = H(n,m+1)/H(n,m).
# Multiplying by H(n,m) and summing m = 1..floor(n/2) gives L(a) = 0.
#
# SIGN CONVENTION.  The p_i below are the coefficients of L exactly as used
# in the identity; in equation form L(a)=0 reads
#     2(n+1)^2(n+2)(7n-9) a(n)
#        = (n^2-1)(364n^2-503n+153) a(n-1)
#          + 3(3n-1)(3n-2)(2n-3)(7n-2) a(n-2).
# Replacing L by -L replaces R by -R, so the two must be read together.

p0 = -2*(n + 1)**2*(n + 2)*(7*n - 9)
p1 = (n**2 - 1)*(364*n**2 - 503*n + 153)
p2 = 3*(3*n - 1)*(3*n - 2)*(2*n - 3)*(7*n - 2)

u1 = n*(n + 2)*(n - 2*m) / ((3*n - 2*m - 1)*(3*n - 2*m - 2)*(3*n - 2*m - 3))

u2 = n*(n - 1)*(n + 1)*(n + 2)*(n - 2*m)*(n - 2*m - 1) / ((3*n - 2*m - 1)*(3*n - 2*m - 2)*(3*n - 2*m - 3)*(3*n - 2*m - 4)*(3*n - 2*m - 5)*(3*n - 2*m - 6))

v = (m + 2)*(n - 2*m)*(n - 2*m - 1) / (m*(3*n - 2*m - 1)*(3*n - 2*m - 2))

R = -2*(m - 1)*(2*m + 1)*(n + 1)*(n + 2)*(28*m**3*n - 36*m**3 - 224*m**2*n**2 + 442*m**2*n - 198*m**2 + 539*m*n**3 - 1526*m*n**2 + 1339*m*n - 360*m - 441*n**4 + 1617*n**3 - 2100*n**2 + 1140*n - 216) / ((3*n - 2*m - 1)*(3*n - 2*m - 2)*(3*n - 2*m - 3)*(3*n - 2*m - 4))
\end{lstlisting}

\section{The verifier}\label{app:code}

\begin{lstlisting}[style=python]
#!/usr/bin/env python3
"""Independent verifier for the A011553 paper.

Every headline claim is re-checked on a code path that does not reuse the
formula being verified.

  a(n)      recomputed from the DEFINITION, by literal enumeration of the
            standard fillings of the 3 x n array (n <= 6) and by a forward
            dynamic program over ballot words (n <= 120).  Neither uses a
            determinant, a hook length or a product formula.
  Lemma 2   checked four ways against each other: literal enumeration of
            standard fillings of the skew diagram (n,n,n)/(r,1); a general
            k x k Aitken determinant by cofactor expansion; the 180-degree
            rotation to the straight shape (n, n-1, n-r) with hook lengths
            computed cell by cell; and the closed product of the paper.
  Lemma 1   both sides enumerated literally.
  Thm A     against the ballot-word DP and against the OEIS b-file.
  Thm B, C  exactly on every available term, 3 <= n <= 300, plus the table
            of which (order, degree) pairs admit an operator at all.
  certificate  read as TEXT from certificates/certificate.txt - operator,
            summand ratios and R - then (i) the ratios are confirmed
            against exact Fraction values of H, (ii) the identity is
            verified by sympy.cancel, (iii) independently by comparing the
            coefficient vectors of the cleared-denominator polynomial
            identity, (iv) independently by exact evaluation at 400
            pseudo-random integer points, and (v) term by term in exact
            Fraction arithmetic together with both boundary evaluations.
  Thm B'    the transport M = ((4 S^-1 - 3)/(7n-9)) o L, symbolically.
  Thm D     the majorant and the monotonicity behind the dominated
            convergence step, exactly; the three constants 27/64, 1, 1/e.

Needs SymPy, the OEIS b-file for A011553 beside it as b011553.txt, and the
certificate one directory up as ../certificates/certificate.txt.

Run:
    python verify_a011553.py
"""
from __future__ import annotations

import random
import sys
from fractions import Fraction as F
from math import factorial
from pathlib import Path

import sympy as sp

HERE = Path(__file__).resolve().parent
FAILURES: list[str] = []
N_SYM, M_SYM = sp.symbols('n m')


def check(label: str, ok: bool) -> None:
    print(f"{'PASS' if ok else 'FAIL'}  {label}", flush=True)
    if not ok:
        FAILURES.append(label)


# ------------------------------------------------------------------ b-file
def load_bfile() -> dict[int, int]:
    table: dict[int, int] = {}
    for line in (HERE / "b011553.txt").read_text().splitlines():
        line = line.strip()
        if not line or line.startswith("#"):
            continue
        idx, val = line.split()
        table[int(idx)] = int(val)
    return table


# ------------------------------- a(n) straight from the definition --------
def tableaux_enumerate(n: int) -> tuple[int, int, list[list[list[int]]]]:
    """Enumerate the standard fillings of the 3 x n array."""
    rows = [[0] * n for _ in range(3)]
    lengths = [0, 0, 0]
    odd = total = 0
    keep: list[list[list[int]]] = []

    def rec(val: int) -> None:
        nonlocal odd, total
        if val > 3 * n:
            total += 1
            if rows[1][0] % 2 == 1:
                odd += 1
                if n == 2:
                    keep.append([r[:] for r in rows])
            return
        for i in range(3):
            j = lengths[i]
            if j == n or (i > 0 and lengths[i - 1] <= j):
                continue
            rows[i][j] = val
            lengths[i] += 1
            rec(val + 1)
            lengths[i] -= 1

    rec(1)
    return odd, total, keep


def dp_word(n: int) -> int:
    """Forward DP over ballot words of content (n,n,n); first 2 at odd spot."""
    cur = {(0, 0, 0, 0): 1}
    for t in range(1, 3 * n + 1):
        nxt: dict[tuple[int, int, int, int], int] = {}
        for (c1, c2, c3, flag), w in cur.items():
            if c1 < n:
                k = (c1 + 1, c2, c3, flag)
                nxt[k] = nxt.get(k, 0) + w
            if c2 < c1 and c2 < n:
                nf = flag if flag else (1 if t % 2 == 1 else 2)
                k = (c1, c2 + 1, c3, nf)
                nxt[k] = nxt.get(k, 0) + w
            if c3 < c2 and c3 < n:
                k = (c1, c2, c3 + 1, flag)
                nxt[k] = nxt.get(k, 0) + w
        cur = nxt
    return sum(w for (c1, c2, c3, fl), w in cur.items()
               if (c1, c2, c3) == (n, n, n) and fl == 1)


# ------------------------------------------- four routes to the atom -----
def skew_enumerate(lam: tuple[int, ...], mu: tuple[int, ...]) -> int:
    """f^{lam/mu} by literal enumeration of standard skew fillings."""
    k = len(lam)
    mu = tuple(mu) + (0,) * (k - len(mu))
    N = sum(lam) - sum(mu)
    lengths = list(mu)
    count = 0

    def rec(val: int) -> None:
        nonlocal count
        if val > N:
            count += 1
            return
        for i in range(k):
            j = lengths[i]
            if j >= lam[i]:
                continue
            if i > 0 and j >= mu[i - 1] and lengths[i - 1] <= j:
                continue
            lengths[i] += 1
            rec(val + 1)
            lengths[i] -= 1

    rec(1)
    return count


def det_exact(mat: list[list[F]]) -> F:
    k = len(mat)
    if k == 1:
        return mat[0][0]
    out = F(0)
    for j in range(k):
        if mat[0][j] == 0:
            continue
        minor = [row[:j] + row[j + 1:] for row in mat[1:]]
        out += (-1) ** j * mat[0][j] * det_exact(minor)
    return out


def f_aitken(lam: tuple[int, ...], mu: tuple[int, ...]) -> int:
    """f^{lam/mu} by the Aitken determinant (Stanley EC2, Cor. 7.16.3)."""
    k = len(lam)
    mu = tuple(mu) + (0,) * (k - len(mu))
    N = sum(lam) - sum(mu)
    mat = [[F(1, factorial(lam[i] - mu[j] - i + j))
            if lam[i] - mu[j] - i + j >= 0 else F(0)
            for j in range(k)] for i in range(k)]
    val = factorial(N) * det_exact(mat)
    assert val.denominator == 1
    return int(val)


def f_hooklength(lam: tuple[int, ...]) -> int:
    """f^lam by Frame-Robinson-Thrall, hook lengths computed cell by cell."""
    lam = tuple(x for x in lam if x > 0)
    if not lam:
        return 1
    N = sum(lam)
    conj = [sum(1 for x in lam if x > j) for j in range(lam[0])]
    num = factorial(N)
    den = 1
    for i, li in enumerate(lam):
        for j in range(li):
            den *= (li - j) + (conj[j] - i) - 1
    assert num % den == 0
    return num // den


def rotate_skew_to_straight(n: int, r: int) -> tuple[int, ...]:
    """180-degree rotation of (n,n,n)/(r,1): rows reversed."""
    return (n, n - 1, n - r)


def atom_product(n: int, r: int) -> int:
    """The paper's Lemma 2."""
    if not 1 <= r <= n:
        return 0
    num = 2 * r * (r + 2) * factorial(3 * n - r - 1)
    den = factorial(n) * factorial(n + 2) * factorial(n - r)
    assert num % den == 0
    return num // den


def f_three_row_classical(lam: tuple[int, int, int]) -> int:
    """f^{(l1,l2,l3)} = N! prod_{i<j}(e_i-e_j) / prod e_i!,  e_i = l_i+3-i."""
    e = [lam[0] + 2, lam[1] + 1, lam[2]]
    num = factorial(sum(lam))
    for i in range(3):
        for j in range(i + 1, 3):
            num *= e[i] - e[j]
    den = 1
    for x in e:
        den *= factorial(x)
    assert num % den == 0
    return num // den


# ------------------------------------------------------------- operators
def p0(n): return -2 * (n + 1) ** 2 * (n + 2) * (7 * n - 9)
def p1(n): return (n * n - 1) * (364 * n * n - 503 * n + 153)
def p2(n): return 3 * (3 * n - 1) * (3 * n - 2) * (2 * n - 3) * (7 * n - 2)


def mathar_lhs(n, a):
    return (6 * (n + 2) * (n + 1) ** 2 * a[n]
            - (n + 1) * (164 * n**2 - 179 * n + 51) * a[n - 1]
            + (46 * n**3 - 609 * n**2 + 812 * n + 12) * a[n - 2]
            + 12 * (3 * n - 4) * (2 * n - 5) * (3 * n - 5) * a[n - 3])


def inv_fact(v: int) -> F:
    return F(1, factorial(v)) if v >= 0 else F(0)


def H(n: int, m: int) -> F:
    if 3 * n - 2 * m - 1 < 0:
        return F(0)
    return (F(8 * m * (m + 1)) * F(factorial(3 * n - 2 * m - 1))
            * inv_fact(n) * inv_fact(n + 2) * inv_fact(n - 2 * m))


def LH(n: int, m: int) -> F:
    return p0(n) * H(n, m) + p1(n) * H(n - 1, m) + p2(n) * H(n - 2, m)


def load_certificate() -> dict[str, sp.Expr]:
    """Parse certificates/certificate.txt as plain text."""
    out: dict[str, sp.Expr] = {}
    for line in (HERE.parent / "certificates" / "certificate.txt").read_text().splitlines():
        line = line.strip()
        if not line or line.startswith("#") or "=" not in line:
            continue
        name, rhs = line.split("=", 1)
        out[name.strip()] = sp.sympify(rhs.strip(),
                                       locals={'n': N_SYM, 'm': M_SYM})
    return out


# --------------------------------------------------------------------------
def main() -> None:
    bfile = load_bfile()
    n, m = N_SYM, M_SYM
    print("=" * 74)
    print("Verifier - SYT of shape (n,n,n) with odd (2,1) entry (OEIS A011553)")
    print("=" * 74)
    check("b-file loaded: 200 terms, offset 1",
          len(bfile) == 200 and min(bfile) == 1 and max(bfile) == 200)

    # --- 1  a(n) from the definition
    NBF = 6
    tot_tab: dict[int, int] = {}
    ok = True
    table_n2: list[list[list[int]]] = []
    for N in range(1, NBF + 1):
        odd, total, keep = tableaux_enumerate(N)
        tot_tab[N] = total
        ok &= odd == bfile[N]
        if N == 2:
            table_n2 = keep
    check(f"literal enumeration of standard fillings of the 3 x n array gives "
          f"a(n), 1 <= n <= {NBF}", ok)
    check(f"the same enumeration gives f^(n,n,n) = 2(3n)!/(n!(n+1)!(n+2)!), "
          f"1 <= n <= {NBF}",
          all(tot_tab[N] == 2 * factorial(3 * N)
              // (factorial(N) * factorial(N + 1) * factorial(N + 2))
              for N in range(1, NBF + 1)))

    NDP = 120
    dp = {N: dp_word(N) for N in range(1, NDP + 1)}
    check(f"ballot-word DP agrees with the b-file, 1 <= n <= {NDP}",
          all(dp[N] == bfile[N] for N in range(1, NDP + 1)))

    # --- 2  Lemma 2, four independent routes
    ok = True
    for N in range(2, 9):
        for r in range(1, N + 1):
            if skew_enumerate((N, N, N), (r, 1)) != atom_product(N, r):
                ok = False
    check("Lemma 2 = literal enumeration of standard fillings of the skew "
          "diagram (n,n,n)/(r,1), 2 <= n <= 8, all r", ok)
    ok = True
    for N in range(1, 23):
        for r in range(1, N + 1):
            if f_aitken((N, N, N), (r, 1)) != atom_product(N, r):
                ok = False
    check("Lemma 2 = the 3 x 3 Aitken determinant, 1 <= r <= n <= 22", ok)
    ok = True
    for k in (2, 4, 5):
        lim = {2: 24, 4: 13, 5: 9}[k]
        for N in range(1, lim + 1):
            for r in range(1, N + 1):
                rot = tuple([N] * (k - 2) + [N - 1, N - r])
                if f_aitken(tuple([N] * k), (r, 1)) != f_hooklength(rot):
                    ok = False
    check("the 180-degree rotation  f^{(n^k)/(r,1)} = f^{(n^(k-2), n-1, n-r)} "
          "against hook lengths computed cell by cell, k = 2,4,5", ok)
    ok = True
    for N in range(1, 41):
        for r in range(1, N + 1):
            rot = rotate_skew_to_straight(N, r)
            if not (f_hooklength(rot) == f_three_row_classical(rot)
                    == atom_product(N, r)):
                ok = False
    check("for k = 3: hook lengths = the classical 3-row product = Lemma 2, "
          "1 <= r <= n <= 40", ok)
    check("r = 2 class equals A123555(n-1) = 16(3n-3)!/((n-2)! n! (n+2)!), "
          "2 <= n <= 60",
          all(atom_product(N, 2) == 16 * factorial(3 * N - 3)
              // (factorial(N - 2) * factorial(N) * factorial(N + 2))
              for N in range(2, 61)))

    # --- 3  Lemma 1
    ok = True
    for N in range(1, NBF + 1):
        odd, _, _ = tableaux_enumerate(N)
        cls = [skew_enumerate((N, N, N), (r, 1)) for r in range(1, N + 1)]
        ok &= sum(cls[r - 1] for r in range(2, N + 1, 2)) == odd
        ok &= sum(cls) == tot_tab[N]
    check(f"Lemma 1 (first-descent decomposition), both sides enumerated "
          f"literally, 1 <= n <= {NBF}", ok)

    # --- 4  Theorem A
    def a_sum(N): return sum(atom_product(N, r) for r in range(2, N + 1, 2))
    check(f"Theorem A equals the ballot-word DP, 1 <= n <= {NDP}",
          all(a_sum(N) == dp[N] for N in range(1, NDP + 1)))
    check("Theorem A agrees with the b-file, 1 <= n <= 200",
          all(a_sum(N) == bfile[N] for N in range(1, 201)))
    check("Theorem A in the m-form a(n) = sum_m H(n,m), 1 <= n <= 200",
          all(sum(H(N, M) for M in range(1, N // 2 + 1)) == bfile[N]
              for N in range(1, 201)))
    check("sum over ALL r reproduces f^(n,n,n), 1 <= n <= 200",
          all(sum(atom_product(N, r) for r in range(1, N + 1))
              == 2 * factorial(3 * N)
              // (factorial(N) * factorial(N + 1) * factorial(N + 2))
              for N in range(1, 201)))

    # --- 5  the recurrences
    NMAX = 300
    a_ext = {N: a_sum(N) for N in range(1, NMAX + 1)}
    check(f"Theorem C (order-2 operator L) exact, 3 <= n <= {NMAX}",
          all(p0(N) * a_ext[N] + p1(N) * a_ext[N - 1] + p2(N) * a_ext[N - 2] == 0
              for N in range(3, NMAX + 1)))
    check(f"Theorem B (Mathar's order-3 operator) exact, 4 <= n <= {NMAX}",
          all(mathar_lhs(N, a_ext) == 0 for N in range(4, NMAX + 1)))

    def fit(order: int, deg: int, n0: int = 20):
        """Nullspace of the exact linear system for
        sum_{i<=order} q_i(n) a(n-i) = 0, deg q_i <= deg, on a window of
        60 consecutive n starting at n0.  An empty nullspace is a PROOF
        that no such operator annihilates a on that window."""
        cols = (order + 1) * (deg + 1)
        rows = [[sp.Integer(N) ** d * a_ext[N - i]
                 for i in range(order + 1) for d in range(deg + 1)]
                for N in range(n0, n0 + cols + 60)]
        return sp.Matrix(rows).nullspace()

    order_deg = {(o, d): len(fit(o, d)) for o in (1, 2, 3) for d in range(0, 6)}
    check("no order-1 operator with coefficient degree <= 12 annihilates a(n) "
          "on 20 <= n <= 100 (exact nullspace over Q)",
          all(len(fit(1, d)) == 0 for d in range(0, 13)))
    check("no order-2 operator with coefficient degree <= 3; degree 4 gives a "
          "one-dimensional space",
          all(order_deg[(2, d)] == 0 for d in range(4)) and order_deg[(2, 4)] == 1)
    vec = fit(2, 4)[0]
    qs = [sp.expand(sum(vec[i * 5 + d] * n ** d for d in range(5)))
          for i in range(3)]
    scale = sp.cancel(C_p0_check := sp.cancel(p0(n) / qs[0]))
    check("that one-dimensional space is exactly L",
          all(sp.expand(sp.cancel(scale * q - f(n))) == 0
              for q, f in zip(qs, (p0, p1, p2))))
    check("at coefficient degree 3 the smallest operator has order 3 and is "
          "unique - it is Mathar's", order_deg[(3, 3)] == 1)
    vec = fit(3, 3)[0]
    qs = [sp.expand(sum(vec[i * 4 + d] * n ** d for d in range(4)))
          for i in range(4)]
    scale = sp.cancel(mat0 / qs[0]) if (mat0 := 6 * (n + 2) * (n + 1) ** 2) else 1
    check("and that operator is Mathar's, up to a rational scalar",
          all(sp.expand(sp.cancel(scale * q - mm)) == 0 for q, mm in zip(
              qs, [6 * (n + 2) * (n + 1) ** 2,
                   -(n + 1) * (164 * n**2 - 179 * n + 51),
                   46 * n**3 - 609 * n**2 + 812 * n + 12,
                   12 * (3 * n - 4) * (2 * n - 5) * (3 * n - 5)])))
    xx = sp.Symbol('x')
    charpoly = (sp.LC(sp.Poly(p0(n), n)) * xx**2 + sp.LC(sp.Poly(p1(n), n)) * xx
                + sp.LC(sp.Poly(p2(n), n)))
    check("the leading coefficients of L give -14(x-27)(x+1): the two formal "
          "growth rates of L are 27 and -1",
          sp.expand(charpoly + 14 * (xx - 27) * (xx + 1)) == 0)

    # --- 6  the certificate, read as text
    C = load_certificate()
    check("certificate file supplies p0, p1, p2, u1, u2, v, R",
          set(C) == {'p0', 'p1', 'p2', 'u1', 'u2', 'v', 'R'})
    check("the operator in the certificate file is the one used above",
          all(sp.expand(C[k] - f(n)) == 0
              for k, f in (('p0', p0), ('p1', p1), ('p2', p2))))
    ok = True
    for N in range(4, 40):
        for M in range(1, N // 2 + 1):
            hv = H(N, M)
            if hv == 0:
                continue
            for key, tgt in (('u1', H(N - 1, M) / hv),
                             ('u2', H(N - 2, M) / hv),
                             ('v', H(N, M + 1) / hv)):
                val = sp.cancel(C[key].subs({n: N, m: M}))
                if F(int(val.p), int(val.q)) != tgt:
                    ok = False
    check("u1, u2, v really are H(n-1,m)/H(n,m), H(n-2,m)/H(n,m), "
          "H(n,m+1)/H(n,m), exactly, 4 <= n < 40", ok)

    ident = C['p0'] + C['p1'] * C['u1'] + C['p2'] * C['u2'] \
        - (C['R'].subs(m, m + 1) * C['v'] - C['R'])
    check("certificate identity: sympy.cancel of the difference is 0",
          sp.cancel(sp.together(ident)) == 0)
    num, den = sp.fraction(sp.together(ident))
    pnum = sp.Poly(sp.expand(num), n, m)
    check("independent route: the cleared-denominator numerator is the zero "
          "polynomial (all coefficients vanish)",
          all(c == 0 for c in pnum.coeffs()) or pnum.total_degree() < 0
          or sp.expand(num) == 0)
    rng = random.Random(20260929)
    ok = True
    for _ in range(400):
        N = rng.randint(50, 4000)
        M = rng.randint(1, 40)
        try:
            val = sp.cancel(ident.subs({n: N, m: M}))
        except ZeroDivisionError:
            continue
        if val != 0:
            ok = False
    check("independent route: exact evaluation of the identity at 400 "
          "pseudo-random integer points, all zero", ok)

    check("left boundary: R(n,1) = 0 identically",
          sp.simplify(C['R'].subs(m, 1)) == 0)
    poles = {sp.expand(sp.simplify(p))
             for p in sp.solve(sp.denom(sp.cancel(C['R'])), m)}
    check("R has poles only at m = (3n-k)/2, k = 1,2,3,4",
          poles == {sp.expand(sp.Rational(3, 2) * n - sp.Rational(k, 2))
                    for k in (1, 2, 3, 4)})

    Rc = sp.cancel(C['R'])
    Rn, Rd = sp.Poly(sp.numer(Rc), n, m), sp.Poly(sp.denom(Rc), n, m)

    def Rnum(N: int, M: int):
        """R(N,M) as an exact Fraction, or None at a pole."""
        den = int(Rd.eval({n: N, m: M}))
        return None if den == 0 else F(int(Rn.eval({n: N, m: M})), den)

    ok = True
    for N in range(3, 201):
        top = N // 2
        for M in range(1, top + 1):
            rm, rm1 = Rnum(N, M), Rnum(N, M + 1)
            if rm is None or rm1 is None or \
                    LH(N, M) != rm1 * H(N, M + 1) - rm * H(N, M):
                ok = False
        if Rnum(N, 1) != 0 or H(N, top + 1) != 0 or Rnum(N, top + 1) is None \
                or sum(LH(N, M) for M in range(1, top + 1)) != 0:
            ok = False
    check("telescoped identity term by term, both boundary terms zero, and "
          "the telescoped sum zero: exact Fractions, 3 <= n <= 200", ok)

    # --- 7  Mathar as a left multiple
    alpha, beta = sp.Rational(-3) / (7 * n - 9), sp.Rational(4) / (7 * n - 9)
    prod = [alpha * C['p0'],
            alpha * C['p1'] + beta * C['p0'].subs(n, n - 1),
            alpha * C['p2'] + beta * C['p1'].subs(n, n - 1),
            beta * C['p2'].subs(n, n - 1)]
    mat = [6 * (n + 2) * (n + 1) ** 2,
           -(n + 1) * (164 * n**2 - 179 * n + 51),
           46 * n**3 - 609 * n**2 + 812 * n + 12,
           12 * (3 * n - 4) * (2 * n - 5) * (3 * n - 5)]
    check("Theorem B's transport M = ((4 S^-1 - 3)/(7n-9)) o L, all four "
          "coefficients, symbolic",
          all(sp.simplify(sp.expand(x - y)) == 0 for x, y in zip(prod, mat)))

    # --- 8  the asymptotic proof
    def tot(N): return (2 * factorial(3 * N)
                        // (factorial(N) * factorial(N + 1) * factorial(N + 2)))
    ok_prod = ok_mono = ok_bound = True
    for N in range(3, 51):
        for r in range(1, N + 1):
            pr = F(1)
            for j in range(r + 1):
                pr *= F(N + 1 - j, 3 * N - j)
            Pnr = F(atom_product(N, r), tot(N))
            ok_prod &= Pnr == F(r * (r + 2)) * pr
            ok_mono &= all(F(N + 1 - j, 3 * N - j) >= F(N - j, 3 * N - j - 1)
                           for j in range(r))
            ok_bound &= Pnr <= F(r * (r + 2)) * F(N + 1, 3 * N) ** (r + 1)
    check("P_n(r) = r(r+2) prod_{j=0}^{r} (n+1-j)/(3n-j), exact, 3<=n<=50",
          ok_prod)
    check("the factors (n+1-j)/(3n-j) are decreasing in j, 3<=n<=50", ok_mono)
    check("majorant P_n(r) <= r(r+2)((n+1)/(3n))^(r+1) <= r(r+2)(4/9)^(r+1), "
          "exact, 3<=n<=50",
          ok_bound and all(F(N + 1, 3 * N) <= F(4, 9) for N in range(3, 51)))
    check("sum_{r>=1} r(r+2)/3^(r+1) = 1",
          sp.Sum(m * (m + 2) / sp.Integer(3) ** (m + 1), (m, 1, sp.oo)).doit() == 1)
    check("its even part sum_{m>=1} 4m(m+1)/3^(2m+1) = 27/64",
          sp.Sum(4 * m * (m + 1) / sp.Integer(3) ** (2 * m + 1),
                 (m, 1, sp.oo)).doit() == sp.Rational(27, 64))
    check("the unconstrained-shape comparison constant sum_{r even} r/(r+1)! "
          "= 1/e",
          sp.simplify(sp.Sum(2 * m / sp.factorial(2 * m + 1),
                             (m, 1, sp.oo)).doit() - sp.exp(-1)) == 0)

    # --- 9  the paper's tables
    print()
    print("PAPER DISPLAY (Section 2): the two SYT of shape (2,2,2) with odd T(2,1)")
    for T in table_n2:
        print("     " + "   ".join(" ".join(f"{x}" for x in row) for row in T))
    print()
    print("PAPER TABLE 1 (Section 2): the first-descent classes f^{(n,n-1,n-r)}")
    print("    n |" + "".join(f"{'r=' + str(r):>8s}" for r in range(1, 7))
          + " |     a(n) | f^(n,n,n)")
    for N in range(1, 7):
        cells = "".join(f"{atom_product(N, r) if 1 <= r <= N else 0:8d}"
                        for r in range(1, 7))
        print(f"   {N:2d} |{cells} | {bfile[N]:8d} | {tot(N):9d}")
    print()
    print("PAPER TABLE 2 (Section 5): dim of the space of operators, given order")
    print("            " + "".join(f"  deg {d}" for d in range(6)))
    for o in (1, 2, 3):
        print(f"   order {o}  " + "".join(f"  {order_deg[(o, d)]:5d}"
                                          for d in range(6)))
    print()
    print("PAPER TABLE 3 (Section 6): convergence to the asymptotic")
    print("     n |  a(n)/f^(n,n,n) |  a(n) 64 pi n^4 / 3^(3n+7/2)")
    for N in (10, 50, 100, 200, 300):
        ratio = float(F(a_ext[N], tot(N)))
        kot = float(sp.Integer(a_ext[N]) * 64 * sp.pi * sp.Integer(N) ** 4
                    / sp.Integer(3) ** sp.Rational(6 * N + 7, 2))
        print(f"   {N:3d} |      {ratio:.8f}   |        {kot:.8f}")
    print(f"   lim |      {27 / 64:.8f}   |        1.00000000")
    print()
    print("PAPER TABLE 4 (Section 6): the limiting first-descent distribution")
    print("     r |  r(r+2)/3^(r+1) |  P_n(r) at n = 300")
    for r in range(1, 9):
        print(f"   {r:3d} |      {float(F(r * (r + 2), 3 ** (r + 1))):.8f}   |"
              f"      {float(F(atom_product(300, r), tot(300))):.8f}")

    print()
    if FAILURES:
        print(f"{len(FAILURES)} CHECK(S) FAILED:")
        for f in FAILURES:
            print("   -", f)
        sys.exit(1)
    print("ALL CHECKS PASSED")


if __name__ == "__main__":
    main()
\end{lstlisting}

\end{document}